\theoremstyle{definition}
\newtheorem{definition}{Definition}[section]
\newtheorem{theorem}{Theorem}
\newtheorem{example}[theorem]{Example}
\newtheorem{lemma}[theorem]{Lemma}
\newcommand{\func}[1]{\operatorname{\mathrm{#1}}}
\newcommand{\fix}[2]{{#1}^{#2}}
\DeclareMathOperator{\stab}{Stab}
\DeclareMathOperator{\conj}{Conj}
\DeclareMathOperator{\id}{e}
\DeclareMathOperator{\dih}{Dih}
\DeclareMathOperator{\Z}{Z}
\newcommand{\Conj}[2]{\conj(#1; #2)}
\newcommand{\C}[2]{\func{C}(#1; #2)}
\newcommand{\cconj}[2]{\card{\conj(#1; #2)}}
\newcommand{\cC}[2]{\card{\C{#1}{#2}}}
\newcommand{\cstab}[1]{\card{\stab(#1)}}
\newcommand{\refl}[0]{s_1, s_2, ..., s_n}
\newcommand{\card}[1]{\left| {#1} \right|}
\newcommand{\numfix}[1]{\card{\fix{X}{#1}}}
\def\acts{\mathrel{\reflectbox{$\righttoleftarrow$}}}
\title{Algebraic Enumeration of Polypolyhedra}
\author{G.\! Henderson-Walshe, M.\! Langton, J.C.\! McLeod, P.L.\! Wilson}
\begin{document}

\maketitle

\begin{abstract}
Polypolyhedra are edge-transitive compounds of polyhedra. In this paper we use group theory to determine the number of distinct polypolyhedra whose symmetry group is any given finite irreducible Coxeter group. We apply this result in order to enumerate the 3-dimensional polypolyhedra.
\end{abstract}

\section{Introduction}\label{sec:intro}

\emph{Polypolyhedra} are edge-transitive compounds of polyhedra, originating as origami models. They were first systematically described and enumerated by R.\! Lang in \cite{lang_2016}.
His article framed the problem of enumerating the polypolyhedra and divided it into steps. He gave a definition of polypolyhedra and recognised the two criteria by which any two polypolyhedra can be determined to be distinct or equivalent: first by symmetry, then by topology. Lang's methodology for enumerating polypolyhedra was computational and essentially brute force; we favour instead an analytical approach and point out a mistake in Lang's algorithm herein. Only one paper concerning polypolyhedra has been published since Lang's, namely \cite{belcastro}. That paper enumerated symmetric colourings of polypolyhedra, which is not the purpose of the present paper. 

The present paper enumerates the polypolyhedra by symmetry without resorting to a computer search. We frame the problem in more mathematical terms rather than with reference to origami, but we inherit Lang's division of the problem into steps, and confirm his results. We derive a new, algebraic method of enumerating the polypolyhedra based on symmetry, which extends to higher dimensional analogues of polypolyhedra.

Although our enumeration produces the same results as Lang's, we find that Lang's method is based on an assumption which is not true, but which happened to lead him to the correct answer in the 3-dimensional case. We explain his algorithm and its mistake in Section \ref{sec:langsmain result}.

The present section lays out the mathematical background for our main result; Section \ref{sec:back} develops the tools needed for our main result; Section \ref{sec:main result} states our main result and the data produced therein. Our conclusions form Section \ref{sec:conc}.

\subsection{Key concepts}\label{key_concepts}

Lang's definition of polypolyhedra is stated somewhat informally. We give an equivalent definition of what we call \emph{strict polypolyhedra}, stated without reference to origami. We define a \emph{general polypolyhedron} as a set of straight line segments which we call \emph{struts}, which connect a pair of points called \emph{endpoints}, satisfying the following two properties.
\begin{enumerate}
    \item There is a finite group of rotations about the origin which is transitive on the set of struts.
    \item Each endpoint must be incident at least two struts.
\end{enumerate}
The following additional properties define a strict polypolyhedron.
\begin{enumerate}
    \setcounter{enumi}{2}
    \item Each strut has precisely two points of intersection with other struts. These points of intersection are called \emph{endpoints}.
    \item The struts are not all connected by a sequence of endpoints.
    \item The curves defined by the union of the struts are topologically linked.
\end{enumerate}

\subsection{Overview of the enumeration strategy}

Property 1 gives polypolyhedra a very simple algebraic structure, allowing us to capture all of the important information about a polypolyhedron in few variables. The enumeration of polypolyhedra follows by reducing polypolyhedra to a classification based on Property 1 and enumeration of each classification. Property 2 is merely a restriction by which we filter out some possible polypolyhedra.

Enumeration depends on drawing distinction between objects. Determining whether two polypolyhedra are distinct requires an equivalence relation, which is defined in terms of orbits under a group action.

A group action $G \acts X$ is a representation of a group $G$ as permutations on a set $X$, or in our case transformations of a vector space. The origin-preserving isometries of a vector space are an example of a group action.

The choice of group is the most fundamental property of a polypolyhedron. Previous origami-based analysis only used the finite rotation groups of $\mathbb{R}^3$, whereas our main result is more abstract and can be applied with any finite isometry group. 

Since the group must be transitive on the set of struts, we need only specify the location of one \emph{seed strut}, with the remaining struts being \emph{generated} by taking the images of the seed strut under each action of the group. This bijection between a polypolyhedron and its seed strut means that two seed struts are equivalent (in the same orbit) precisely when the polypolyhedra they generate are equivalent. Therefore, our first task is to calculate the number of orbits of (seed) struts.

A seed strut can be categorised (though not specified) by the orbits of its endpoints under the group action, a pair which we call the \emph{orbit-type}. Our main result, Theorem \ref{thm:main result}, computes the number of distinct polypolyhedra of a given orbit-type and group action as the number of orbits of possible seed struts under the group. 

The main new result is the calculation of $\fix{X}{w}$, the number of struts fixed by each group element $w$. Below we list the steps needed to calculate the formula for the number of general polypolyhedra of a given orbit-type, 
\begin{enumerate}
    \item Compute the sizes of certain conjugacy classes of the symmetry groups of the polypolyhedron $W$, $W_I$, and $W_J$.
    \item For each conjugacy class choose a representative, $w\in W$.
    \item Compute the number of possible endpoints of each orbit $C_I$ and $C_J$ respectively fixed by $w$ using Theorem \ref{thm:important}.
    \item Compute the number of possible $(C_I,C_J)$ struts fixed by $w$.
    \item Compute the number of distinct struts using Burnside's lemma.
    \item Subtract the number of univalent struts.
\end{enumerate}
 
 This allows us to calculate the number of general polypolyhedra. Algebraic methods may be inadequate for the enumeration of strict polypolyhedra, which we discuss in the conclsuion.

\subsection{Definitions and notation}
Two notions are central to the present paper: that of a group action and that of a conjugacy action. A group action is a group $W$ and a set $X$---denoted $W \acts X$---such that $W$ permutes the elements of $X$. Relevant to our purposes, a group of matrices is a group action on a vector space and the symmetric group $S_n$ is a group action on a set of $n$ elements. The subset of $W$ which fixes some $x \in X$ is a subgroup called the \emph{stabiliser}, $\stab(x)$.

A conjugacy action is a group action of a group on itself, $W \acts W$, defined by $w: u \mapsto wuw^{-1}$. The centraliser $\func{C}(w)$ is the set $\{u \in W \mid wu=uw\}$, which is the same as the stabiliser for a general group action. A conjugacy class of $w$, $\Conj{W}{w}$, is the set of images of $w$ under the conjugacy action. To know the size of a conjugacy class is to know the size of its centraliser, by the following well-known lemma \cite{gallian}.

\begin{lemma}\label{lem:centraliser conjugacy}
\[
    \card{W} = \cconj{W}{w} \cC{W}{w}
\]
\end{lemma}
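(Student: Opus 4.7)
The plan is to obtain this identity as a direct instance of the orbit--stabiliser theorem applied to the conjugacy action $W \acts W$ defined in the preceding paragraph by $u \colon w \mapsto uwu^{-1}$. Under that action, the orbit of the chosen element $w$ is by definition the conjugacy class $\Conj{W}{w}$, so the left-hand factor on the right of the identity is precisely the size of this orbit.

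Next, I would identify the stabiliser of $w$ under the conjugacy action. By definition, $u$ stabilises $w$ precisely when $uwu^{-1} = w$, which rearranges to $uw = wu$, so the stabiliser is exactly $\{u \in W \mid uw = wu\} = \C{W}{w}$. This is the remark the authors make immediately before the lemma: for the conjugacy action, the stabiliser of an element coincides with its centraliser.

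The final step is to invoke the orbit--stabiliser theorem, which states that for any group action of a finite group $W$ on any set, $\card{W}$ equals the size of the orbit times the size of the stabiliser of any representative. Substituting the two identifications above yields
\[
    \card{W} = \cconj{W}{w}\,\cC{W}{w},
\]
as required. There is no genuine obstacle here: the whole content of the lemma is the recognition that the conjugacy class and centraliser play the roles of orbit and stabiliser, after which the classical orbit--stabiliser theorem (which may be cited from \cite{gallian}) does all the work.
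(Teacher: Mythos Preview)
Your argument is correct: identifying $\Conj{W}{w}$ and $\C{W}{w}$ as the orbit and stabiliser of $w$ under the conjugacy action and then invoking the orbit--stabiliser theorem is exactly the standard route. The paper itself does not supply a proof of this lemma at all---it simply records it as well known and cites \cite{gallian}---so your proposal in fact provides more than the paper does, and is precisely the argument one would find in the cited reference.
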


Table \ref{tab:notation} lists notation used in this paper. For the purpose of this table, $I, J \subset S$, $H < W$, $w \in W$, and $x \in X$.
\begin{table}[h]
    \begin{center}
    \begin{tabular}{|c|c|c|}
        \hline
         $W \acts X$ & The symmetry group of the polypolyhedron, acting on the set $X$ \\
         \hline
         $W \acts W$ & The conjugacy action $w: u \mapsto wuw^{-1}$ \\
         \hline
         $S$ & The simple reflections generating $W$, $\{\refl \}$ \\
         \hline
         $n$ & The rank (number of simple reflections) of $W$, $\card{S}$ \\
         \hline
         $W^*$ & The set of rotations in $W$ \\
         \hline
         $\Conj{H}{w}$ & The conjugacy class of $w$ in $H$ \\
         \hline
         $\C{H}{w}$ & The centraliser of $w$ in $H$, $\{u \in H \mid wu=uw\}$ \\
         \hline
         $\Z(W)$ & The centre of $W$, $\{w \in W \mid \forall u \in W,~ wu=uw\}$ \\
         \hline
         $G(W)$ & The Coxeter graph of $W$ \\
         \hline
         $W_I$ & For $I \subset S$, $W_I$ is the group generated by $I$ \\
         \hline
         $\stab(x)$ & $\{w \in X \mid w(x)=x \}$ \\
         \hline
         $X^{w}$ & $\{x \in X \mid w(x)=x \}$ \\
         \hline
         $X/W$ & The orbits of $X$ under $W$ \\
         \hline
         $W/H$ & The set of cosets of $H$ in $W$ \\
         \hline
         $[W:H]$ & The index of $H$ in $W$, $\card{W/H}$ \\
         \hline
         $f_I(w)$ & The size of $\{ u \in W/W_I \mid w(uC_I) = uC_I \}$ \\
         \hline
         $l(w)$ & The length of $w$, defined in Section \ref{sec:cox}  \\
         \hline
         $P(W,I,J)$ & The number of distinct general polypolyhedra of a given orbit-type \\
         \hline
    \end{tabular}
    \caption{Notation used in the paper.}\label{tab:notation}
    \end{center}
\end{table}

\section{Background}\label{sec:back} 
\subsection{Coxeter Groups}\label{sec:cox}

Our equivalence relation will be based on a group of isometries which preserve the origin. This group of isometries forms a subgroup of the orthogonal group of order $n$, $O(n) = \{Q \in GL(\mathbb{R} \mid QQ^T = 1\}$. Every group we use in this paper is origin-preserving, so for the sake of brevity we will not state that explicitly from now on. One kind of group element is a \emph{reflection}. Such an element is a matrix, $R$, with one eigenvalue equal to $1$ and with multiplicity $n-1$, and another eigenvalue equal to $-1$ with multiplicity $1$. The orthogonal complement of the eigenvector with eigenvalue $-1$ is clearly fixed, so it must be the eigenspace of $1$. Therefore the two eigenspaces, $E_{1}(R)$ and $E_{-1}(R)$, are orthogonal complements, meaning that $R \in O(n)$.

Coxeter groups correspond to subgroups of $O(n)$ generated by a subset of the reflections. This is essentially a consequence of the following lemma.

\begin{lemma}
    Every $M \in O(n)$ can be expressed as a product of at most $n$ reflections.
\end{lemma}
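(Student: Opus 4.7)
The plan is to prove this by induction on the rank $n$, using at each step a single reflection to reduce the dimension of the space on which $M$ acts nontrivially. This is the classical Cartan--Dieudonné argument.

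For the base case $n=1$, note that $O(1) = \{\pm 1\}$: the identity is the empty product of reflections, and $-1$ is itself a reflection, so the claim holds. For the inductive step, assume the statement for dimensions less than $n$ and let $M \in O(n)$. If $M$ is the identity, the empty product suffices. Otherwise there exists a nonzero $v \in \mathbb{R}^n$ with $M(v) \neq v$; I would set $w = M(v) - v$ and take $R$ to be the reflection in the hyperplane $w^\perp$.

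The central calculation is to show $R M(v) = v$, so that $RM$ fixes the line spanned by $v$ and, being orthogonal, also preserves $v^\perp$. Because $M$ is an isometry, $\lVert v + w \rVert = \lVert M(v) \rVert = \lVert v \rVert$, which on expansion forces $\langle v, w\rangle = -\tfrac{1}{2}\lVert w \rVert^2$. Substituting this into the reflection formula
\[
R(u) = u - \frac{2\langle u, w\rangle}{\langle w, w \rangle} w
\]
gives $R(v) = v + w = M(v)$ and $R(w) = -w$, and hence $R(M(v)) = R(v+w) = v$ as required. Thus $RM$ restricts to an orthogonal transformation of the $(n-1)$-dimensional space $v^\perp$.

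By the inductive hypothesis applied to this restriction, $RM|_{v^\perp}$ is a product of at most $n-1$ reflections of $v^\perp$; each of these extends to a reflection of $\mathbb{R}^n$ by acting as the identity on $\mathrm{span}(v)$. Composing on the left with $R$ then expresses $M = R(RM)$ as a product of at most $n$ reflections in $\mathbb{R}^n$, completing the induction. The main obstacle is purely the algebraic verification that the single reflection in $(M(v) - v)^\perp$ really undoes $M$ on the chosen vector; everything else is bookkeeping to ensure the lower-dimensional reflections lift correctly to $\mathbb{R}^n$.
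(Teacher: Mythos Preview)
Your proof is correct. Both your argument and the paper's are instances of the Cartan--Dieudonn\'e strategy, but they are organised differently. The paper fixes the standard basis $\{e_i\}$ and builds up a product $M_k = R_k \cdots R_1$ one reflection at a time, choosing each $R_k$ to send $M_{k-1}(e_k)$ to $M(e_k)$ and then verifying, via an orthogonality computation on $M_{k-1}(e_k)-M(e_k)$, that $R_k$ does not disturb the already-matched vectors $M(e_1),\dots,M(e_{k-1})$; after $n$ steps $M_n$ agrees with $M$ on the whole basis. You instead run a clean induction on the ambient dimension: one reflection forces $RM$ to fix a nonzero vector $v$, and the inductive hypothesis on the invariant hyperplane $v^\perp$ supplies the remaining $\le n-1$ reflections, which you then lift back to $\mathbb{R}^n$. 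Your version is basis-free and absorbs the ``does not disturb earlier vectors'' check into the single observation that an orthogonal map fixing $v$ preserves $v^\perp$; the paper's version avoids the restrict-and-lift bookkeeping at the cost of working coordinate by coordinate.
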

\begin{proof}
    Denote by $e_i$ the $i$th standard basis vector for $\mathbb{R}^n$. For each $1<k\leq n$, define $M_k$ inductively as $M_k=R_{k}R_{k-1}...R_1$, where $R_{k}$ is the reflection which sends $M_k(e_{k})$ to $M(e_{k})$. Further, define $M_0 = I_n$. We will show that $M_n = M$. 
    
    Assume $M_{k-1}(e_i) = M(e_i)$ for all $i < k$. If $M_{k-1}(e_k) = M(e_k)$ then $R_k = I_n$ and $M_k(e_i)= M(e)$ for all $i <= k$. If not, then since $R_k(M_{k-1}(e_{k})=M(e_{k})$ and $R_k^2 = I_n$, $R_k(M_{k-1}(e_k)-M(e_k)) = -(M_{k-1}(e_k) - M(e_k)) \neq 0$. Hence $E_{-1}(R_k) = \func{span}\{M_{k-1}(e_k) - M_{k-1}(e_k)\}$, which is orthogonal to $M_{k-1}(e_i)$ for all $i \neq k$. Therefore, $M_{k-1}(e_i) \in E_1(R_k)$ for all $i < k$. Hence $M_k(e_i) = M(e_i)$ for all $i <= k$.
    
    By induction $M_n(e_i) = M(e_i)$ for all $e_i$, so $M_n$ is a product of reflections equal to $M$. As a result, the finite subgroups of $O(n)$ by which we compare polypolyhedra are generated by finite sets of reflections.
\end{proof}

As stated above, this result provides a simplifying way to view these groups, in the language of Coxeter theory \cite[p.~31]{wilson}, as follows.

Any group defined in terms of a set of generating reflections is called a Coxeter group. A description of a group in terms of a list of generators and relations between them is called a \emph{presentation}. The following is a presentation of a general Coxeter group, with the list of generators on the left and the relations on the right. The relations are described by a function $m(i, j)$, which is the order of the element $s_i s_j$.
\begin{equation*}
    W = \langle s_1, s_2, ..., s_n \mid (s_i s_j)^{m(i,j)} = 1 \rangle
\end{equation*}
In this presentation, $m(i,j)=1$ if and only if $i = j$, otherwise $m(i,j) \geq 2$. It is trivial to show that $m(i,j)=m(j,i)$. The set $S= \{s_1, s_2, ..., s_n\}$ should be a minimal set needed to generate $W$ and is referred to as the set of \emph{simple reflections}. The number of simple reflections is denoted $n$, which is also least rank of a matrix representation of the group, so we refer to $n$ as the rank.

A Coxeter group $W$ can be represented by a finite graph whose edges are labelled with integers $\geq 3$ called a \emph{Coxeter graph}, or $G(W)$. This graph is constructed by mapping each simple reflection to a unique node and adding an edge between the nodes corresponding to $s_i$ and $s_j$ with label $m(i,j)$ if $m(i,j) \geq 3$. Since all edges are labelled and the label $3$ is most frequent, the label of $3$ may be omitted \cite[p.~31]{humphrey}. 

\begin{example}\label{eg:h3}
Icosahedral symmetry, denoted $H_3$, is the isometry group of a regular icosahedron. It has the following presentation as a Coxeter group:
\begin{equation*}
    H_3 = \langle s_1, s_2, s_3 \mid s_1^2 = s_2^2 = s_3^2 = (s_1 s_2)^5 = (s_2 s_3)^3 = (s_3 s_1)^2 = 1 \rangle.
\end{equation*}
It has Coxeter graph \dynkin[Coxeter]H3. Each pair of reflections produce a distinct rotation of an icosahedron. $s_1 s_2$ is a vertex rotation, hence it has order 5; $s_2 s_3$ is face rotation, hence it has order 3; and $s_1 s_3$ is an edge rotation, hence it has order 2.
\end{example}

A Coxeter group $W$ is called irreducible precisely when $G(W)$ is connected \cite[p.~30]{humphrey}. The irreducible Coxeter groups have been classified and shown to correspond precisely to the finite isometry groups which fix the origin in $\mathbb{R}^n$ \cite{coxeter_enum}. Although our main result applies to any group, it requires that one know the size of the centraliser of an arbitrary element. 

The present paper only provides a method for finding centralisers for the series of groups $\alpha_n$, $\beta_n$, $\dih_d$ and $H_3$, which constitute ``nearly all''\footnote{Similarly to the classification of simple groups, irreducible Coxeter groups come in three series, which we cover, and a few exceptional groups, of which we only cover $H_3$.} irreducible Coxeter groups. We define these groups in Table \ref{tab:irr class} by their Coxeter graph. We have also included an abstract group to which it is isomorphic, which we will need in Section \ref{sec:conjgacy}. 
\begin{table}[ht]
\begin{center}
\begin{tabular}{|c|c|c|}
\hline
Notation & Graph & Abstract group \cite[p.~33]{wilson}\\
\hline
$\alpha_n$ & \dynkin[Coxeter] A{} & $S_{n+1}$, symmetric group\\
\hline
$\beta_n$ & \dynkin[Coxeter] B{} & $S_2 \wr S_{n+1}$, wreath product of symmetric groups \\
\hline
$\dih_d$ & \dynkin[Coxeter,gonality=d] I{} & $ S_2 \ltimes C_n$, semidirect product with cyclic group\\
\hline
$H_3$ & \dynkin[Coxeter] H3 & $ S_2 \times A_5$, direct product with alternating group\\
\hline
\end{tabular}
\caption{Description of the Coxeter groups we use in this paper.}\label{tab:irr class}
\end{center}
\end{table}

\subsection{The length function}

By expressing group elements as products of a fixed set of generators, we get the following map which will be needed to define the stabilisers of endpoints of polypolyhedra. Let $l: W \rightarrow \mathbb{Z}$ be the minimum number of simple reflections for which a group element $w$ can be expressed as a product. 

The kernel either of the determinant homomorphism or the homomorphism $w \mapsto (-1)^{l(w)}$ both give the index-$2$ subgroup of $W$ which contains all of the rotations in $W$, which we denote $W^*$. The subgroup $W^*$ is what we will use to ``generate'' the polypolyhedra, explained in Subsection \ref{subsec:orbit_type}.

The Coxeter group presentation also allows us to express a type of subgroup which will be useful in classifying the polypolyhedra. A \emph{standard parabolic subgroup}, $W_I$, is the group generated by only the reflections of $I \subseteq S$. Its conjugates are called \emph{parabolic subgroups}. For example, if $W =\beta_4$ and $I= \{s_1, s_3, s_4\}$, $W_I$ is obtained by removing $s_2$, like so: \dynkin[Coxeter] B{*x**}. The result is the group \dynkin[Coxeter] A1 \dynkin[Coxeter] B2, or $S_2 \times \dih_4$.

\subsection{Facets}\label{subsec:orbits}

The \emph{orbit} of an element of $X$ is the set of its images under $W \acts X$. The group structure ensures that this is an equivalence relation, which we show in the following lemma. 

\begin{lemma}
    The relation $y \sim x$ defined by $y = w(x)$ for some $w \in W$ is an equivalence relation on $X$.
\end{lemma}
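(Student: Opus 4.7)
The plan is to verify the three defining axioms of an equivalence relation in turn, each one corresponding directly to one of the three group axioms (identity, inverses, closure under multiplication). This is essentially a bookkeeping exercise, so there is no real obstacle; the point is to record which piece of the group structure is responsible for each property.

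First, for reflexivity, I would invoke the identity element $\id \in W$: since $\id(x) = x$ holds for every $x \in X$ under any group action, we have $x \sim x$. Second, for symmetry, suppose $y \sim x$, so $y = w(x)$ for some $w \in W$. Applying $w^{-1}$, which exists because $W$ is a group, gives $w^{-1}(y) = w^{-1}(w(x)) = (w^{-1}w)(x) = \id(x) = x$, and since $w^{-1} \in W$ this shows $x \sim y$. Third, for transitivity, suppose $y \sim x$ and $z \sim y$, so $y = w(x)$ and $z = u(y)$ for some $w, u \in W$. Then $z = u(w(x)) = (uw)(x)$, and $uw \in W$ by closure, so $z \sim x$.

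The only subtlety worth noting is that each step uses the fact that $W \acts X$ is a genuine group action, i.e.\ that composition in $W$ agrees with composition of the corresponding permutations of $X$; this is what licenses the manipulations $w^{-1}(w(x)) = x$ and $u(w(x)) = (uw)(x)$ above. Once those are granted, the three axioms follow immediately and no further work is needed.
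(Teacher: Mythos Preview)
Your proof is correct and follows essentially the same approach as the paper's own proof: both verify reflexivity via the identity element, symmetry via inverses, and transitivity via closure under multiplication. Your version is in fact slightly more careful in making explicit the group-action axioms $w^{-1}(w(x)) = x$ and $u(w(x)) = (uw)(x)$, which the paper leaves implicit.
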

\begin{proof}
    Let $x, y, z \in X$ and $u, v \in W$. If $x \sim y$, $y = w(x)$, so $x = a^{-1}(y)$. Thus $y \sim x$ and so $\sim$ is symmetric. If $z \sim y$, $z = v(y)$ then $z = v(u(x))=vu(x)$, so $z \sim x$, meaning $\sim$ is transitive. Since $x = \id(x)$, $x \sim x$, and so $\sim$ is reflexive. Therefore $\sim$ is an equivalence relation.
\end{proof}

Thus we refer to \emph{the} orbits as a partition of $X$.

We now give a useful classification of the points in $\mathbb{R}^n$ under the action of $W$. Let $S$ be a set of simple reflections and $I \subset S$. Noting that all reflections $s$ fix a hyperplane through the origin, let $H_i^0=H_i$ be the hyperplane corresponding to $s_i$. The reflection $s_i$ maps the negative half space $H_{i}^-$ to the positive half-space $H_{i}^+$ and vice versa. This gives us the following characterisation \cite{humphrey}:
\[
    C_I = \bigcap_{i} H_i^{\epsilon_i} \text{, where $\epsilon_i = 0$ if $s_i \in I$ and $+$ if $s_i \notin I$}
\]
Because $W$ is generated by $S$ and this characterisation describes the action of $S$ on $C_I$, the action of $W$ on $C_I$ is fully determined. As we shall see, the stabiliser of $C_I$ is $W_I$. Therefore the orbit of $C_I$ is composed of each $wC_I$, which are called the \emph{facets} of type $I$. Since $wC_I = C_I \implies w \in \stab(C_I)=W_I$, the orbit corresponds to the cosets $W / W_I$, which is the orbit of the facet. Every point lies in the orbit of some $C_I$, so we have an abstract interpretation of $C_I$.

The intersection of $k$ linearly independent hyperplanes $H_i$ through the origin is a subspace of dimension $n - k$. Since our set of hyperplanes come from $I \subseteq S$, and $S$ corresponds to a linearly independent set of hyperplanes $\dim C_I = n - |I|$. 

If $|I| = n$ then $C_I$ is just the origin, so to specify a non-trivial single point, we need to take $|I| = n-1$, which gives us an axis. All points on an axis are identical in terms of how they are acted upon by the group. Therefore, we can view the axis as a positive and negative pair of a pre-determined radius. 

The facets can also be named according to the corresponding facets of polyhedra under polyhedral symmetry, though these names are comparatively less useful. As polyhedral groups have three generating reflections, there are three subsets $I \subset S$ with $\card{I} = 2$. These happen to be the vertices (V), the edges (E), and the faces (F). The empty subset are called $C$ points by Lang.

The group by which we compare struts for equivalence is not exactly the same as that by which we generate the polypolyhedron from the seed strut. This is because including the reflections of $W$ in the generation of a polypolyhedron would create an extra intersection for any reflection plane passing through and not containing the seed strut. On the other hand, we would not like polypolyhedra which are simply reflections of each other to be considered distinct. 

\subsection{Orbit-type and Generation of polypolyhedra}\label{subsec:orbit_type}
The seed strut is defined by an unordered pair of endpoints, which we shall classify by the facet types $C_I$ and $C_J$ of the endpoints. This is called the \emph{orbit-type}, which we shall write as $(W,I,J)$. If $I=J$ the type is called ``homoorbital'' while if $I \neq J$ then the type is called ``heterorbital''. 

A polypolyhedron is generated from its seed strut, $\{a,b\}$ as the set, $\{\{r(a), r(b)\} \mid r \in W^*\}$. In English, a polypolyhedron is the set of images of the seed strut under $W^*$. 

\subsection{The Conjugacy Action}

The following theorem gives a correspondence between the orbits of $W \acts X$ and the conjugacy action $W \acts W$ \cite{wilson}.

\begin{lemma}[Group action correspondence]\label{conjstab}
    The action of the group on a point $x$ corresponds to the conjugacy action on $\stab(x)$.
    If $H < W$ and $u \in W$, $\fix{X}{u H u^{-1}} = u(\fix{X}{H})$ and $\stab(u(X)) = u \stab(X) u^{-1}$.   
\end{lemma}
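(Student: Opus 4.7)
The plan is to verify each of the two stated equalities directly from the definitions, treating them as symmetric bookkeeping statements about how the group action intertwines with conjugation. Both are double inclusions, and the forward inclusion in each is essentially a one-line computation; applying the same reasoning with $u^{-1}$ (and $uHu^{-1}$, respectively $u(x)$) in place of $u$ (respectively $H$, $x$) gives the reverse inclusion for free.

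First I would handle $\fix{X}{uHu^{-1}} = u(\fix{X}{H})$. Let $x \in \fix{X}{H}$, so $h(x) = x$ for every $h \in H$. For an arbitrary element $uhu^{-1}$ of $uHu^{-1}$, compute
\[
    (uhu^{-1})(u(x)) = u(h(x)) = u(x),
\]
which shows $u(x) \in \fix{X}{uHu^{-1}}$, hence $u(\fix{X}{H}) \subseteq \fix{X}{uHu^{-1}}$. The reverse inclusion follows by running the same argument with $u$ replaced by $u^{-1}$ and $H$ replaced by $uHu^{-1}$: this yields $u^{-1}(\fix{X}{uHu^{-1}}) \subseteq \fix{X}{H}$, and applying $u$ to both sides gives the desired containment.

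Next I would prove $\stab(u(x)) = u\, \stab(x)\, u^{-1}$ by the same template. If $w \in \stab(x)$ then $w(x) = x$, and the calculation $(uwu^{-1})(u(x)) = u(w(x)) = u(x)$ shows $uwu^{-1} \in \stab(u(x))$, giving $u\,\stab(x)\,u^{-1} \subseteq \stab(u(x))$. Conversely, if $v \in \stab(u(x))$ then $v(u(x)) = u(x)$, so $(u^{-1}vu)(x) = x$, i.e.\ $u^{-1}vu \in \stab(x)$ and therefore $v \in u\,\stab(x)\,u^{-1}$.

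I do not expect a genuine obstacle here: both identities are bijective reformulations of the defining equation $w(x)=x$ under the substitution $w \mapsto uwu^{-1}$, $x \mapsto u(x)$. The only thing to be careful about is interpreting $\fix{X}{H}$ as the simultaneous fixed-point set of all elements of $H$ (so that conjugation of $H$ as a set makes sense), and noting that the informal opening sentence of the lemma is summarised by the second identity: it is precisely $\stab(u(x)) = u\,\stab(x)\,u^{-1}$ that exhibits the correspondence between the orbit of $x$ and the conjugacy class of $\stab(x)$.
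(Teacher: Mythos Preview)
Your proposal is correct and follows essentially the same approach as the paper: both arguments reduce to the single computation $(uhu^{-1})(u(x)) = u(h(x))$ and then read off the two identities. The only cosmetic difference is that the paper sets up the substitution $a = u(b)$, $w = u\tau u^{-1}$ once and records $w(a)=a \iff \tau(b)=b$ as a biconditional, whereas you phrase the same thing as a double inclusion with the reverse direction obtained by swapping $u \leftrightarrow u^{-1}$.
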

\begin{proof}
    Let $a, b \in X$, $b=u^{-1}(a)$. For any $\tau \in W$, let $w = u \tau u^{-1}$. Then $w(a) = u \tau(b)$, so $w(a)=a, a \in \fix{X}{w}$ if and only if $\tau(b) = b, b \in \fix{X}{\tau}$. Therefore $a \in \fix{X}{u H u^{-1}}$ if and only if $b \in \fix{X}{H}$, i.e. $a \in u(\fix{X}{H})$, so $\fix{X}{u H u^{-1}} = u(\fix{X}{H})$. Similarly, $a \in \fix{X}{\tau} \implies \tau \in \stab(u(X))$ and $b \in \fix{X}{w} \implies \tau \in u \stab(X) u^{-1}$, so $\stab(u(X)) = u \stab(X) u^{-1}$.
\end{proof}

\begin{theorem}\label{thm:parabolic_stab}
    If $\card{I}=n-1$ then $\stab(C_I) = \langle I \rangle$.
\end{theorem}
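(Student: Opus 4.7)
The plan is to establish both containments, dispatching the easy direction first and reserving the hypothesis $|I| = n-1$ for the harder one.

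For the inclusion $W_I \subseteq \stab(C_I)$, I would read it straight off the characterisation $C_I = \bigcap_i H_i^{\epsilon_i}$ with $\epsilon_i = 0$ exactly when $s_i \in I$: this gives $C_I \subseteq H_i$ for every $s_i \in I$, and since $s_i$ fixes $H_i$ pointwise it fixes $C_I$ pointwise. Hence $\langle I \rangle = W_I$ stabilises (indeed, pointwise fixes) $C_I$.

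For the reverse inclusion, the hypothesis $|I| = n - 1$ is crucial: it forces $\dim C_I = 1$, so $C_I$ is an open ray emanating from the origin. Pick a unit vector $v$ on this ray. Any $w \in \stab(C_I)$ must send $v$ to a vector on the same ray of the same norm (as $w$ is orthogonal), and the only such vector is $v$ itself. Therefore $\stab(C_I) = \stab(v)$, and it suffices to show $\stab(v) \subseteq W_I$.

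To identify $\stab(v)$ I would appeal to the classical fact (Humphreys) that the stabiliser in $W$ of any point in the closure $\bar{C}$ of the fundamental chamber is generated by precisely those simple reflections whose hyperplanes contain the point. For our $v \in C_I$, those simple reflections are exactly the $s_i \in I$: we have $v \in H_i$ iff $\epsilon_i = 0$ iff $s_i \in I$, while for the unique $s_k \notin I$ the point $v$ lies strictly in the open half-space $H_k^+$ and so misses $H_k$. This yields $\stab(v) = \langle s_i : s_i \in I \rangle = W_I$, as required.

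The principal obstacle is justifying the classical fact invoked in the last paragraph. If a self-contained argument were demanded in place of a citation, I would run an induction on $l(w)$ using the strong exchange condition: for $w \in \stab(v)$ with $l(w) > 0$ and a reduced expression $w = s_{j_1} \cdots s_{j_l}$, the geometry of the fundamental chamber (combined with $v \in \bar{C}$ and $w(v) = v$) forces $s_{j_1}$ to send some positive root to a negative root whose reflecting hyperplane contains $v$, which in turn forces $s_{j_1}(v) = v$ and hence $s_{j_1} \in I$; then $s_{j_1}w$ is shorter, still fixes $v$, and lies in $W_I$ by induction, giving $w \in W_I$.
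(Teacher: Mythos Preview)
Your proposal is correct and follows essentially the same route as the paper: the easy inclusion is read off the definition of $C_I$, and the hard inclusion is obtained by an induction on $l(w)$ driven by the Exchange Condition. The difference is purely one of packaging. You reduce $\stab(C_I)$ to $\stab(v)$ for a point $v$ in the closure of the fundamental chamber and then invoke the standard Humphreys result (with the length-induction sketch as a backup), whereas the paper carries out that same length induction by hand directly on $\stab(C_I)$, exploiting $|I|=n-1$ via the single missing reflection $t \in S \setminus I$. Your framing is cleaner and more citable; the paper's hands-on version makes the role of $|I|=n-1$ more explicit in the inductive mechanics but is otherwise the same argument.
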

\begin{proof}

    Because for each $s \in I$, $s(C_I) = C_I$, it is clear that  $\langle I \rangle \leq \stab(C_I)$.  We will need the fact that $S = I \cup \{t \}$.

    We define $l_T(w): W \mapsto \mathbb{Z}$ as the minimum number of elements in the set $T$. The automorphism $u \mapsto wuw^{-1}$ maps the generating set $S$ to $wSw^{-1}$ and therefore $l_S(u) = l_{wSw^{-1}}(wuw^{-1})$. To show that for all $w \in \stab(hC_I)$, $w \in \langle hIh^{-1} \rangle$, we use induction on $l_{hSh^{-1}}(w)$.

    The base case is $l_S(w)=1 \implies w \in S$. By definition, the only simple reflections which fix $C_I$ are those in $I$. 

    If for any $s \in I$, $l_S(ws) < l_S(w)$ then by the Exchange Condition \cite{humphrey}, $w$ can be written as a product $u s$ for some $u \in W$. Therefore, since $s \in \stab(C_I)$, $u \in \stab(C_I)$. Further, $l_S(u) < l_S(w)$, so by the induction hypothesis, $u, s \in \langle I \rangle$. Hence $u s \in \langle I \rangle$. The same argument follows for $l(sw) < l(w)$.
    
    If we assume $l(ws)=l(sw) > l(w)$, then $w$ can only be written in a reduced expression as $w = tut$. Therefore $w(C_I) = tut(C_I)$ and so $u(tC_I) = t(w(C_I)) \implies u(tC_I) = tC_I$, i.e. $u \in \stab(tC_I)$. Because $l_{tSt}(w) = l(u) < l(w)$, we know that $w \in \langle tIt \rangle$. Therefore $u \in \stab(C_I)$, and so $w \notin \stab(C_I)$.
\end{proof}

\subsection{Fixed Points of $w$}
This section will explain our formula for calculating the size of the fixed set of group action $w \in W \acts X$ where $X$ is restricted to a single orbit. In other words, we calculate $\fix{X}{w}$ for a transitive group action.
\begin{lemma}\label{conjugacy} Conjugation preserves the number of fixed elements.
\begin{itemize}
    \item Conjugate elements fix the same number of elements of an orbit. That is, for all $v \in \Conj{W}{u}$, we have that $\card{\fix{X}{u}} = \card{\fix{X}{v}}$.
    
    \item Any two points in the same orbit are fixed by the same number of elements of each conjugacy class. That is, for all $v \in W$, we have: \[\cconj{\stab(x)}{u} = \cconj{\stab(v(x))}{u}\]

\end{itemize}
\end{lemma}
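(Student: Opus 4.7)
The plan is to obtain both parts as immediate corollaries of the group-action correspondence established in Lemma \ref{conjstab}. The key idea is that conjugation by $v$ provides a bijection between $\stab(x)$ and $\stab(v(x))$, and also between the fixed set of $u$ and the fixed set of $vuv^{-1}$, and these bijections interact well with $W$-conjugacy classes.

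For the first bullet, I would take $v \in \Conj{W}{u}$ and write $v = gug^{-1}$ for some $g \in W$. Applying the first identity of Lemma \ref{conjstab} (either directly to the cyclic subgroup generated by $u$, or by a one-line re-run of the argument in its proof for a single element), we get $\fix{X}{v} = \fix{X}{gug^{-1}} = g(\fix{X}{u})$. Because the action of $g$ on $X$ is a bijection, $\card{\fix{X}{v}} = \card{g(\fix{X}{u})} = \card{\fix{X}{u}}$.

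For the second bullet, the second identity of Lemma \ref{conjstab} gives $\stab(v(x)) = v\stab(x)v^{-1}$, so the map $\phi \colon \stab(x) \to \stab(v(x))$ defined by $\phi(h) = vhv^{-1}$ is a bijection. I would then observe that $\phi$ preserves $W$-conjugacy: for any $h \in \stab(x)$, the element $\phi(h) = vhv^{-1}$ lies in the same $W$-conjugacy class as $h$, so in particular $\phi$ restricts to a bijection between those elements of $\stab(x)$ that are $W$-conjugate to $u$ and the corresponding elements of $\stab(v(x))$. Taking cardinalities yields $\cconj{\stab(x)}{u} = \cconj{\stab(v(x))}{u}$ under the natural reading of this notation as counting $W$-conjugates of $u$ that happen to lie in the respective stabilisers.

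The main obstacle is not any technical difficulty but simply parsing the notation $\cconj{\stab(x)}{u}$ correctly and being explicit that the bijection $\phi$ is to be counted conjugacy-class by conjugacy-class, rather than carrying $u$ itself to a different element of its $W$-class. Once this is made precise, both parts are one-line consequences of Lemma \ref{conjstab}; no further machinery is required.
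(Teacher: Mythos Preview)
Your proposal is correct and follows essentially the same approach as the paper: both bullets are deduced from Lemma~\ref{conjstab}. The only cosmetic difference is that for the first bullet the paper writes out the element-chase $x\in\fix{X}{u}\implies w(x)\in\fix{X}{v}$ directly (in effect reproving the relevant special case of Lemma~\ref{conjstab}), whereas you invoke that lemma to obtain $\fix{X}{gug^{-1}}=g(\fix{X}{u})$ and then compare cardinalities; your remark about the interpretation of $\cconj{\stab(x)}{u}$ is also well taken and makes the second bullet clearer than the paper's terse version.
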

\begin{proof}
Let $u, v, w \in W$ such that $ wu = vw$, and  suppose $W \acts X$ is transitive.
\begin{align*}
    x \in \fix{X}{u} & \implies wu(x)=w(x) &\implies vw(x) = w(x) &\\
    & \implies w(x) \in \fix{X}{v} &\implies \card{\fix{X}{v}} \geq \card{\fix{X}{u}}. &\\
    \text{Conversely,} \\
    x \in \fix{X}{v} & \implies w^{-1}v(x)=w^{-1}(x) &\implies uw^{-1}(x) = w^{-1}(x) &\\
    & \implies w^{-1}(x) \in \fix{X}{u} &\implies \card{\fix{X}{u}} \geq \card{\fix{X}{v}}.
\end{align*}

Therefore, $\card{\fix{X}{u}} = \card{\fix{X}{v}}$.

By Lemma \ref{conjstab},
\begin{equation*}
    u \in \cconj{\stab(x)}{u} \iff v \in \cconj{w\stab(x)w^{-1}}{u}
\end{equation*}
Therefore, $\cconj{\stab(x)}{u} = \cconj{\stab(v(x))}{u}$.
\end{proof}

Lemma \ref{conjugacy} allows us to prove the following theorem for the calculation of the fixed sets of group actions.
\begin{theorem}[Size of Fixed Set Formula]\label{thm:important}
If $\cC{\stab(x)}{w} = 0$ then $\numfix{w} = 0$. Otherwise,

\[
    \numfix{w} = \frac{\cC{W}{w}}{\cC{\stab(x)}{w}}
\]

\begin{proof}
    By counting the number of pairs $(x,u)$ where $x \in X$ and $u \in \conj(w)$ such that $u(x)=x$ in two different ways, we get the  following equality:
    \[
        \sum_{u \in \conj(w)} \numfix{w} = \sum_{x' \in X} \cconj{\stab(x')}{w}.
    \]
    We apply Lemma \ref{conjugacy} on the left and right to obtain,
    \begin{equation*}
        \cconj{W}{w} \numfix{w} = [W:\stab(x)] \cconj{\stab(x)}{w} 
    \end{equation*}
    If $\cC{\stab(x)}{w} = 0$ then $\numfix{w} = 0$ since $\cC{W}{w} \neq 0$. Otherwise,
    \begin{align*}
        \frac{\card{W}}{\cC{W}{w}} \numfix{w} & = \frac{\card{W}}{\cstab{x}} \cconj{\stab(x)}{w} \\
        \implies \numfix{w} & = \frac{\cC{W}{w}}{\cstab{x}} \frac{\cstab{x}}{\cC{\stab(x)}{w}} \\
        \implies \numfix{w} & = \frac{\cC{W}{w}}{\cC{\stab(x)}{w}} \
    \end{align*}
\end{proof}
\end{theorem}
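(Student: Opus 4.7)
The plan is to follow the double counting hint and enumerate the set
\[
    P = \{(y, u) \in X \times \Conj{W}{w} : u(y) = y\}
\]
in two ways, then translate between conjugacy class sizes and centraliser sizes via Lemma \ref{lem:centraliser conjugacy}.

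First I would fibre $P$ over its second coordinate: for each $u \in \Conj{W}{w}$ the fibre has size $\numfix{u}$, and the first bullet of Lemma \ref{conjugacy} gives $\numfix{u} = \numfix{w}$, so summation yields $|P| = \cconj{W}{w}\cdot\numfix{w}$. Next I would fibre over the first coordinate: for each $y \in X$ the fibre counts those $u \in \Conj{W}{w}$ that happen to lie in $\stab(y)$. The second bullet of Lemma \ref{conjugacy} makes this count constant over the orbit, so using transitivity of $W \acts X$ together with orbit--stabiliser $|X| = [W : \stab(x)]$ gives $|P| = [W : \stab(x)]\cdot\cconj{\stab(x)}{w}$.

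Equating the two expressions and substituting $\cconj{W}{w} = |W|/\cC{W}{w}$ and $\cconj{\stab(x)}{w} = |\stab(x)|/\cC{\stab(x)}{w}$ from Lemma \ref{lem:centraliser conjugacy} would then cancel the $|W|$ factors arising on both sides and leave the claimed identity. The boundary case in which no conjugate of $w$ lands in $\stab(x)$ makes the right-hand sum vanish and thus forces $\numfix{w} = 0$, which handles the degenerate alternative in the statement.

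The main obstacle is interpretive rather than computational: the symbol $\cconj{\stab(x)}{w}$ has to be read as the number of $W$-conjugates of $w$ that lie inside $\stab(x)$ (an intersection $\Conj{W}{w} \cap \stab(x)$), rather than as a $\stab(x)$-internal conjugacy class, since in general $w$ need not lie in $\stab(x)$ at all. Under this reading, the second bullet of Lemma \ref{conjugacy}---which ultimately rests on $\stab(u(x)) = u\,\stab(x)\,u^{-1}$ from Lemma \ref{conjstab}---delivers exactly the constancy along the orbit that is needed to collapse the second sum, and the remainder of the argument is routine bookkeeping.
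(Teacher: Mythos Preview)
Your proposal is correct and follows essentially the same route as the paper: both arguments double-count the set of pairs $(y,u)$ with $u \in \Conj{W}{w}$ and $u(y)=y$, collapse each sum via the two bullets of Lemma~\ref{conjugacy}, and then convert conjugacy-class sizes to centraliser sizes using Lemma~\ref{lem:centraliser conjugacy}. Your closing remark on how $\cconj{\stab(x)}{w}$ must be read (as $\card{\Conj{W}{w}\cap\stab(x)}$) is a useful clarification that the paper leaves implicit, but it does not alter the structure of the argument.
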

As an immediate consequence of this theorem, we have:
\[
    f_I(w) = \frac{\cC{W}{w}}{\cC{W_I}{w}}.
\]
This allows us to calculate the number of polypolyhedral endpoints fixed by each group element.

\subsection{The fixed set of struts}\label{sec:fixed struts}

With Theorem \ref{thm:important} we can compute $f_I(u)$, which will allow us to calculate $\card{\fix{X}{u}}$, the number of $(W,I,J)$ struts fixed by $u$. One may wonder why we do not apply Theorem \ref{thm:important} to the strut directly. The reason is that the struts do not necessarily form a single orbit, so their stabilisers might not form a single conjugacy class; in fact, if they did form a single conjugacy class then there could only ever be one distinct polypolyhedron per orbit-type, which would make this paper rather brief. A strut is fixed by an action $u$ either if both its endpoints are fixed or if the endpoints are swapped, or \emph{flipped}.

In the heteroorbital case, the endpoints may not be swapped by $u$, since they are not in the same orbit. So, we simply take all the possible ways of connecting a strut from the set of fixed $C_I$ points to the set of fixed $C_J$ points, the product $f_I(I) \cdot f_J(u)$.

In the homoorbital case, the number of ways to form a strut between fixed endpoints is $\binom{f_I(u)}{2}$. We then take the number of points which are swapped and divide by two to get the number of flipped struts, $\frac{1}{2} \left(f_I(u^2) - f_I(u) \right)$. Therefore, the total number of fixed struts is given by:
\begin{equation} \label{eq:endpointsToStruts}
    \card{\fix{X}{u}} = \binom{f_I(u)}{2} + \frac{f_I(u^2) - f_I(u)}{2} = \frac{f_I(u)\big(f_I(u) - 2\big) + f_I(u^2)}{2}
\end{equation}

Combining these results with Theorem \ref{thm:important}, we have the following equations for $\card{\fix{X}{u}}$ when $\cC{W_I}{u} \neq 0$.
\begin{theorem} \label{theorem:fixedstruts}
    
    For heterorbital types, 
    \[
    \card{\fix{X}{u}} = \frac{\cC{W}{u}^2}{\cC{W_I}{u} \cC{W_J}{u}}.
    \]
    For homoorbital types, we have
    \[
    \card{\fix{X}{u}} = 
    \frac{ \cC{W}{u}}{ 2\cC{W_I}{u} }\left(\frac{\cC{W}{u}}{\cC{W_I}{u}} - 2 \right) + \frac{\cC{W}{u^2}}{2\cC{W_I}{u^2}}.
    \]
    When $\cC{W_I}{u} = 0$ and $\cC{W_I}{u^2} \neq 0$, $f_I(u)=0$, so 
    \[
        \card{\fix{X}{u}} = \frac{\cC{W}{u^2}}{2\cC{W_I}{u^2}}.
    \]
\end{theorem}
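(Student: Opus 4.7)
The plan is to prove Theorem \ref{theorem:fixedstruts} by direct substitution, combining the counting identities for fixed struts already derived in Section \ref{sec:fixed struts} with the formula from Theorem \ref{thm:important}. The three cases correspond precisely to the three ways a strut with endpoints of facet type $C_I$ and $C_J$ can be fixed by $u$, so the proof is a matter of bookkeeping rather than new conceptual content.

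First I would handle the heterorbital case. Since the endpoints lie in distinct orbits $W/W_I$ and $W/W_J$, no fixed strut can have its endpoints flipped by $u$, so both must be fixed individually. Any pairing of a fixed $C_I$-endpoint with a fixed $C_J$-endpoint gives a valid fixed strut and vice versa, so $\card{\fix{X}{u}} = f_I(u) f_J(u)$. Substituting $f_I(u) = \cC{W}{u}/\cC{W_I}{u}$ and similarly for $J$, using Theorem \ref{thm:important}, immediately yields the stated formula.

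Next I would handle the homoorbital case when $\cC{W_I}{u} \neq 0$. Here a fixed strut either has both endpoints fixed individually or has its endpoints swapped by $u$. The number of unordered pairs of fixed endpoints is $\binom{f_I(u)}{2}$, and the endpoints of a flipped strut must satisfy $u^2(x) = x$ while $u(x) \neq x$, so they are counted (with each strut counted twice, once per endpoint) by $f_I(u^2) - f_I(u)$. Equation \eqref{eq:endpointsToStruts} then gives the total, and substituting the formulas from Theorem \ref{thm:important} for $f_I(u)$ and $f_I(u^2)$ produces the claimed expression after a brief algebraic rearrangement.

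Finally I would treat the degenerate subcase $\cC{W_I}{u} = 0$ with $\cC{W_I}{u^2} \neq 0$. By Theorem \ref{thm:important}, $\cC{W_I}{u} = 0$ forces $f_I(u) = 0$, so no endpoint is fixed by $u$ individually; both the binomial term and the subtracted $f_I(u)$ vanish in \eqref{eq:endpointsToStruts}. Only the flip contribution $\tfrac{1}{2} f_I(u^2)$ survives, and substituting $f_I(u^2) = \cC{W}{u^2}/\cC{W_I}{u^2}$ gives the last formula. The only delicate point, and the one I would be careful about, is ensuring that the derivation of \eqref{eq:endpointsToStruts} in Section \ref{sec:fixed struts} still makes sense when $f_I(u)=0$, but this reduces to checking that the count of flipped pairs $\tfrac{1}{2}(f_I(u^2) - f_I(u))$ is valid in this degenerate situation, which it is since every point fixed by $u^2$ is genuinely swapped by $u$ when no point is fixed by $u$ itself.
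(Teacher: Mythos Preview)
Your proposal is correct and follows essentially the same approach as the paper: the paper derives the fixed-strut counts in terms of $f_I$ and $f_J$ in the paragraphs preceding the theorem (culminating in Equation~\eqref{eq:endpointsToStruts}) and then simply substitutes the formula $f_I(u) = \cC{W}{u}/\cC{W_I}{u}$ from Theorem~\ref{thm:important}, exactly as you do. Your treatment is in fact slightly more careful than the paper's, since you explicitly justify why the flip count $\tfrac{1}{2}\bigl(f_I(u^2)-f_I(u)\bigr)$ remains valid in the degenerate case $f_I(u)=0$.
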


The following lemma allows us to determine precisely which orbit-types fix no struts at all, and can therefore be ignored to simplify calculations.
\begin{lemma}[Filtering of Conjugacy Classes]\label{prop:shortcut}
    For heteroorbital types, 
    \[
        \Conj{W_I \cap W_J}{u} \neq \emptyset \iff \fix{X}{u} \neq \emptyset.
    \]
    For homoorbital types, 
    \[
    \Conj{\sqrt{W_I}}{u} \neq \emptyset \iff \fix{X}{u} \neq \emptyset \text{, where } \sqrt{W_I} = \{u \in W \mid u^2 \in W_I\}.
    \]
\end{lemma}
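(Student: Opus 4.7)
The plan is to reduce each equivalence to a statement about fixed endpoints and then translate via the stabiliser description $\stab(wC_I)=wW_Iw^{-1}$, obtained by combining Theorem~\ref{thm:parabolic_stab} with Lemma~\ref{conjstab}. The key fact used throughout is that $u$ fixes the point $wC_I$ iff $w^{-1}uw\in W_I$, so $f_I(u)\geq 1$ iff the conjugacy class of $u$ in $W$ meets $W_I$.

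For heteroorbital types, since the two endpoints of a strut lie in different orbits they cannot be swapped by $u$, so $u$ fixes a strut iff it fixes each endpoint separately. This is equivalent to $f_I(u)\geq 1$ and $f_J(u)\geq 1$, i.e.\ the conjugacy class of $u$ meeting both $W_I$ and $W_J$, which is the content of the stated condition. Conversely, picking any $w_1, w_2$ realising the two conjugates produces a strut $\{w_1 C_I, w_2 C_J\}$ fixed by $u$.

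For homoorbital types, $u$ fixes a strut either by fixing both endpoints or by flipping them. In either subcase $u^2$ fixes each endpoint, so $u^2$ has a conjugate in $W_I$, equivalently $u$ has a conjugate in $\sqrt{W_I}$; this yields the forward implication. For the converse, take a point $a$ in the orbit of $C_I$ with $u^2(a)=a$ and set $b=u(a)$: if $b\neq a$ then $\{a,b\}$ is a flipped fixed strut, and if $b=a$ then $u\in\stab(a)$, and a counting argument via Theorem~\ref{thm:important} combined with Equation~\eqref{eq:endpointsToStruts} should produce either a second fixed endpoint or a swap pair.

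The main obstacle is the converse in the homoorbital case, specifically the degenerate subcase where $u^2$ and $u$ share a single fixed endpoint: a priori no strut is produced from a single fixed point when no swap is available. I expect the resolution to come from expressing $f_I(u)$ and $f_I(u^2)$ as centraliser quotients via Theorem~\ref{thm:important} and exploiting the parabolic structure of $W$ to rule out this pathological subcase under the hypothesis that the conjugacy class of $u$ meets $\sqrt{W_I}$ nontrivially outside of a purely $W_I$-internal conjugate.
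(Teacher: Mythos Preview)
Your route via $\stab(wC_I)=wW_Iw^{-1}$ and the reduction to $f_I$, $f_J$, $f_I(u^2)$ is exactly the paper's. However, the gap you flag in the homoorbital converse is real and cannot be closed: take $W=\alpha_3\cong S_4$, $I=\{s_1,s_2\}$, and $u=s_1s_2$, a $3$-cycle. Then $u\in W_I\subseteq\sqrt{W_I}$, so $\Conj{\sqrt{W_I}}{u}\neq\emptyset$; yet on the four-point orbit $W/W_I$ the element $u$ fixes exactly one point and has no $2$-cycle, so $f_I(u)=f_I(u^2)=1$ and Equation~\eqref{eq:endpointsToStruts} gives $\card{\fix{X}{u}}=\tfrac{1\cdot(-1)+1}{2}=0$. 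Your hoped-for ``counting argument via Theorem~\ref{thm:important}'' cannot rescue this, because there is simply no second fixed endpoint and no swap pair. The paper's proof commits the same oversight, asserting that $f_I(u)=f_I(u^2)=0$ is \emph{equivalent} to $\fix{X}{u}=\emptyset$ when only the implication $\Rightarrow$ holds. What survives, both in your argument and the paper's, is the direction $\fix{X}{u}\neq\emptyset\Rightarrow\Conj{\sqrt{W_I}}{u}\neq\emptyset$, and that is the direction actually used downstream to discard vanishing terms from Burnside's sum.

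There is a second, subtler gap in your heteroorbital argument. You correctly prove that $\fix{X}{u}\neq\emptyset$ iff the $W$-conjugacy class of $u$ meets $W_I$ \emph{and} meets $W_J$, but then assert this is ``the content of the stated condition'', namely that the class meets $W_I\cap W_J$. These are not the same: conjugates $aua^{-1}\in W_I$ and $bub^{-1}\in W_J$ need not be realised by a common conjugator $c$ with $cuc^{-1}\in W_I\cap W_J$. The paper's proof is equally unclear here (its line ``$\Conj{W_I\cap W_J}{u}=\emptyset\implies\Conj{W_I}{u}=\emptyset$ without loss of generality'' is, as written, the wrong way round), so neither argument establishes the biconditional as stated.
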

\begin{proof}

    For the heterorbital case, first note that $\fix{X}{u} \neq \emptyset \iff f_I(u) > 0$ and $f_J(u)>0$. $\Conj{W_I \cap W_J}{u} = \emptyset \implies \Conj{W_I}{u} = \emptyset$ without loss of generality. $W_I = \stab(C_I)$, so by Theorem \ref{conjstab}, $\Conj{\stab(vC_I)}{u} = \Conj{\stab(C_I)}{u} \neq \emptyset$ for all $v \in W$. This is equivalent to the statement $f_I(u) = 0$.

    For the homoorbital case, 
    \[ \Conj{\sqrt{\stab(C_I)}}{u} = \emptyset \iff \Conj{\sqrt{\stab(vC_I)}}{u} = \emptyset \text{ for all } v \in W,\] again by Theorem \ref{conjstab}. Therefore $u^2 \notin \stab(vC_I) = vW_Iv^{-1}$ and consequently $u \notin vW_Iv^{-1}$ as well. Equivalently, $f_I(u) = f_I(u^2) = 0$. This is equivalent to the statement $\fix{X}{u} = \emptyset$ by equation \ref{eq:endpointsToStruts}.
\end{proof}

\subsection{Conjugacy Class} \label{sec:conjgacy}
In Section \ref{sec:fixed struts}, we used Theorem \ref{thm:important}, which requires us to determine $\cC{W}{w}$ and $\cC{W_I}{w}$. This means we need to know the sizes of the centralisers of conjugacy classes in $W$ and $W_I$. By Lemma \ref{lem:centraliser conjugacy}, this is equivalent to knowing the size of the conjugacy classes, so we give both here. 

A parabolic subgroup of $W$ will always be a direct product of Coxeter groups \cite{humphrey}, and in particular a parabolic subgroup of any of the $\alpha$, $\beta$, or $H$ series will be a direct product of groups in these series. Therefore, we need only determine the conjugacy classes for the $\alpha$, $\beta$, or $H$ series. 

\subsubsection{Conjugacy Classes of $\alpha_n$}\label{sec:alpha conj}
It has been shown that $\alpha_n \cong S_{n+1}$, the symmetric group on $n+1$ objects \cite[p.~33]{wilson}. It is also well known that the conjugacy classes of $S_{n+1}$ correspond to \emph{cycle-types}, where the cycle-type of an element of $S_n$ is the number of cycles of each length in that permutation. Letting $a_j$ be the number of cycles of length $j$ in a permutation $u$, we have the following equations \cite{wilson}:
\begin{align} 
    \cconj{S_{n}}{u} = \frac{n!}{\prod_{j=1}^n j^{a_j}a_j!},  \\
    \cC{S_{n}}{u} = \prod_{j=1}^n j^{a_j}a_j!  \label{eq:alpha}
\end{align}

\subsubsection{Conjugacy Classes of $\beta_n$}
It has been shown that $\beta_n \cong S_2 \wr S_n$, where $\wr$ is a wreath product \cite[p.~15]{wilson}. A wreath product is special kind of semi-direct product, in this case the product $S_2^n \ltimes S^n$, with multiplication defined as follows:
\begin{align*}
    & \beta_n \cong (S_{2}^n,S_n) &\\
    & ((b_i)_{i=1}^{n},v)  ((a_i)_{i=1}^{n},u) = ((a_i  b_{u(i)})_{i=1}^{n}, v  u) &
\end{align*} 
We define the \emph{flip parity}\footnote{The word flip is used because we want to distinguish this from reflections in general.}, $f: \beta_n \rightarrow S_2$. If $x=((a_i, a_n),u)$ where $u$ is a cycle on the set $K$, let $f(x)=\prod_{k \in K} a_k$.

\begin{theorem}[Conjugacy of Equal Flip Parity Cycles]\label{thm:cefpc}
Let $x=((a_i),u)$ and $z=((b_i),v)$ where $u$ and $v$ are cycles. $x \sim z$ if and only if $v \sim u$ and $f(z)=f(x)$. 

\end{theorem}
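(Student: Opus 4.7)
The plan is to directly compute $yxy^{-1}$ for an arbitrary conjugating element $y = ((c_i), w) \in \beta_n$ using the wreath product multiplication law, then read off both the cycle structure and the flip parity of the result. First, inverting $y$ via the formula $((b_i),v)((a_i),u) = ((a_i b_{u(i)}),vu)$ gives $y^{-1} = ((c_{w^{-1}(i)}), w^{-1})$. Composing twice then yields
\[
yxy^{-1} = \bigl((c_{w^{-1}(i)} \, a_{w^{-1}(i)} \, c_{u(w^{-1}(i))})_{i=1}^n,\, wuw^{-1}\bigr).
\]
From the permutation part we immediately see that the conjugate has cycle part $wuw^{-1}$, a cycle of the same length as $u$; this both verifies the necessity of $u \sim v$ and, for the backward direction, dictates the choice of $w$.

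For the forward direction, the support of $wuw^{-1}$ is $w(K)$, so after reindexing by $k = w^{-1}(i)$ the flip parity of $yxy^{-1}$ becomes
\[
f(yxy^{-1}) = \prod_{k \in K} c_k \, a_k \, c_{u(k)}.
\]
Since $u$ restricts to a bijection of $K$, the factors $c_{u(k)}$ merely reindex the $c_k$'s, and because $S_2$ is abelian of exponent two this collapses to $\bigl(\prod_{k} c_k\bigr)^2 \prod_k a_k = f(x)$. Hence flip parity is a conjugacy invariant.

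For the backward direction, suppose $u \sim v$ and $f(x) = f(z)$. Write $u = (k_1, \ldots, k_m)$ and $v = (l_1, \ldots, l_m)$ in cycle notation and choose $w$ with $w(k_j) = l_j$, which forces $wuw^{-1} = v$. Matching tuples of $yxy^{-1}$ and $z$ on the cycle support reduces to the cyclic system
\[
c_{k_j}\, c_{k_{j+1}} = a_{k_j}\, b_{l_j} \qquad (j = 1,\ldots, m, \text{ indices mod } m),
\]
while off the support it collapses to a tautology under the standing convention that tuples are trivial outside the cycle's support. Setting $c_{k_1} = 1$, one solves for $c_{k_2}, \ldots, c_{k_m}$ from the first $m-1$ equations; the $m$th equation is then forced by multiplying all $m$ together, which gives the consistency relation $1 = f(x) f(z)$. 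This holds exactly when $f(x) = f(z)$, producing the required conjugator $y$.

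The main obstacle is simply the careful bookkeeping with the wreath product law, particularly keeping straight which index ranges over $K$, $L$, or $\{1, \ldots, n\}$. A subtlety worth flagging is the implicit convention that the tuple entries outside the cycle support are trivial; without it, conjugation would also need to match the tuple's action on the fixed points, and the theorem's clean ``flip parity plus cycle length'' criterion would require an extra condition. This is the standard signed-cycle convention for hyperoctahedral groups.
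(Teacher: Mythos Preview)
Your argument is correct. Both directions check out: the direct computation of $yxy^{-1}$ is accurate, the cancellation $\bigl(\prod_k c_k\bigr)^2 = 1$ establishes invariance of $f$, and the cyclic system in the backward direction is solvable precisely when $f(x)f(z)=1$. Your explicit flagging of the convention that tuple entries vanish off the cycle support is appropriate; the paper uses the same convention (its Case~1 in the proof writes $a_i = 1$ for $i \notin K$) but does not announce it.

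The paper proceeds differently. For the backward direction it does not solve for a general conjugator but instead conjugates $x$ by an element with trivial permutation part, $y = ((s_i),\id)$ with $s_i = \prod_{j<i} a_j$, and verifies case by case that this sends $x$ to a canonical representative: $((1,\ldots,1),u)$ when $f(x)=1$, or the element with a single $-1$ entry when $f(x)=-1$. Two elements with the same flip parity and conjugate cycle part then map to the same canonical form. For the forward direction the paper simply asserts that $f$ is a homomorphism and deduces $f(z)=f(y)^{-1}f(x)f(y)=f(x)$; this is tidy but relies on extending $f$ to all of $\beta_n$ as $\prod_i a_i$, which the paper never quite makes explicit. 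Your direct computation avoids that ambiguity, while the paper's canonical-form argument has the advantage of producing explicit normal representatives that feed into the later cycle-type/parity count.
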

\begin{proof}
    To show that $v \sim u$ and $f(z)=f(x)$ implies $x \sim z$, we construct an element which maps $x$ to $z$ under conjugation. Let $y = ((s_i)_{i=1}^{n}, \id)$ where
    \[
        s_i = \prod^{i-1}_{j=1} a_j .
    \]
    Note that $y^{-1} = y$. We now conjugate $x$ by $y$:
    \[
        y x y^{-1} = y x y = y \big((s_i a_i)_{i=1}^{n}, u\big) = \big((s_i a_i s_{u(i)})_{i=1}^{n}, u\big).
    \]
    We must consider the following cases.
    
    \noindent \emph{Case 1:} $i \notin K$. Here,
    \[
        s_i a_i s_{u(i)} = s_i a_i s_i = a_i = 1.
    \]
    
    \noindent \emph{Case 2:} $i \in K$ and $i < N$. In this case,
    \[
        s_i a_i s_{u(i)} = s_i a_i s_{i + 1} = \left(\prod^{i-1}_{j=1} a_j\right) a_i \left(\prod^{i}_{j=1} a_j\right) = a_i a_i = 1.
    \] 

    \noindent \emph{Case 3:} $i \in K$ and $i = N$. Finally, in this case
    \[
        s_i a_i  s_{u(i)} = s_N a_i s_1 = \left(\prod^{N-1}_{j=1} a_j\right) a_N 1 = \prod^{N}_{j=1} a_j = f(x).
    \]

    Therefore, if $f(x)=1$ then $x \sim ((1)_{i=1}^{n}, u)$, and if $f(x)=-1$ then $x \sim ((-1 \text{ if } i=N, 1 \text{ if } i \neq N)_{i=1}^{n}, u)$. Therefore if $v \sim u$ and $f(x)=f(y)$ then $x \sim z$.\\

    To the converse, it follows immediately from $x \sim z$ that that $v \sim u$. Moreover, if $y$ is the element such that $y x y^{-1}=z$, then since $f$ is a homomorphism, we have that 
    \[
        f(z) = f(y^{-1}) \cdot f(x) \cdot f(y) = f(y^{-1} \cdot y) \cdot f(x) = f(x) \implies f(z) = f(x).
    \]
\end{proof}
    
We can decompose the $S_n$ component of each element into a product of disjoint cycles $u \in S_n$ \cite{wilson}. Therefore, elements of $\beta_n$ can be decomposed into disjoint cycles (including 1-cycles) of $S_n$, each with an associated flip parity. By Theorem \ref{thm:cefpc}, two elements of $\beta_n$ are conjugate if and only if their decomposition contains the same number of each cycle-type and flip parity combination.

We now calculate the size of the conjugacy class and centraliser of an arbitrary element $w = ((a_i)_{i=1}^{n},u)$. We already have a formula for $\cconj{S_n}{u}$, which we multiply by the possible combinations of flip parities. 

Let the total number of cycles of length $j$ in $w$ be $A_j$ and the total number of positive flip parity cycles of length $j$ be $B_j$. Hence there are $\binom{A_j}{B_j}$ choices of flip parities.

Each element $i$ in a cycle corresponds to an $a_i \in S_2$. Therefore there are $2^j$ choices for a cycle on any choice of $j$ elements. Half of the choices are positive flip parity and half are negative, so there are $2^{j-1}$ choices given a flip parity.

Therefore, the following expression is the number of distinct elements with a given cycle type:
\[
    \prod_{j=1}^n \binom{A_j}{B_j} (2^{j-1})^{A_j}. \\
\]

Hence we have the following derivation for $\cconj{\beta_n}{w}$.

\begin{align*}
    \cconj{S_n}{u} \prod_{j=1}^n \binom{A_j}{B_j} (2^{j-1})^{A_j} &= n!\left(\prod_{j=1}^n \frac{1}{j^{A_j}A_j!} \right) \left( \prod_{j=1}^n \frac{A_j!}{B_j! (A_j - B_j)!} (2^{j-1})^{A_j} \right) &\\
    & = n! \prod_{j=1}^n \frac{1}{B_j! (A_j - B_j)!} \left(\frac{2^{j-1}}{j} \right)^{A_j} .
\end{align*}

By Lemma \ref{lem:centraliser conjugacy}, we have the following expression for $\cC{\beta_n}{w}$:
\begin{equation}\label{eq:beta conj}
    \cC{\beta_n}{w} = 2^n\prod_{j=1}^n B_j! (A_j - B_j)! \left(\frac{j}{2^{j-1}} \right)^{A_j}.
\end{equation}

\subsubsection{Conjugacy Classes of $H_3$} \label{subsec:hconj}

According to Wilson \cite[p~.33]{wilson} $H_3 \cong A_5 \times S_2$, where $A_5$ is the alternating group on five letters. The conjugacy classes of $A_5$ are the subset of $S_5$ consisting of the even cycle types \cite{gallian}. We can find the size of the conjugacy classes of $A_5$ simply using the formula for those of $S_5$. To obtain the conjugacy classes of $H_3$, we just create two copies for each class of $A_5$, corresponding to the two choices of the $S_2$ factor.

\subsubsection{Conjugacy Classes of a Direct Product of Groups}
A direct product can contain any combination of conjugacy classes of the constituent groups. We simply add the frequency of each conjugacy class together, and count any new conjugacy classes formed by combining conjugacy classes of each group together.

Now that we know the size of the conjugacy classes, we can calculate the number of points fixed by each group element.

\subsection{Orbit Counting Theorem}
We now use Burnside's lemma calculate the number of orbits of the struts from the number of struts fixed by each $u \in W$. 

\begin{lemma}[Burnside's Lemma]\label{lem:burnside}
    The following equation can be used to compute the number of orbits of a group action on a set \cite{gallian}:
    \[
        \card{X/G} = \frac{1}{\card{G}}\sum_{g \in G} \card{\fix{X}{g}}.
    \]
\end{lemma}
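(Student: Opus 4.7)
The plan is to prove Burnside's Lemma by the classical double-counting argument applied to the set $T = \{(g,x) \in G \times X \mid g(x) = x\}$. I would compute $|T|$ in two different ways and equate the results. Summing first over $g$ gives immediately $|T| = \sum_{g \in G} |\fix{X}{g}|$, which is the right-hand side of the claimed identity multiplied by $|G|$. The work is in the other summation order.

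Summing over $x$ yields $|T| = \sum_{x \in X} |\stab(x)|$, since for fixed $x$ the set of $g$ with $g(x)=x$ is precisely $\stab(x)$. Next I would invoke the orbit-stabiliser theorem, $|G| = |\stab(x)| \cdot |Gx|$ (the same identity as Lemma \ref{lem:centraliser conjugacy} but applied to the general action $G \acts X$), to rewrite $|\stab(x)| = |G|/|Gx|$. This lets me regroup the sum by orbits: partition $X$ into its orbits $O_1, \dots, O_k$, and observe that for each orbit $O_i$ every $x \in O_i$ contributes $|G|/|O_i|$, so the orbit's total contribution is exactly $|G|$. Summing over the $k = |X/G|$ orbits gives $|T| = |G| \cdot |X/G|$.

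Equating the two expressions for $|T|$ and dividing by $|G|$ yields the claim. The argument is short and requires no obstacles of substance; the only subtlety is ensuring the orbit-stabiliser theorem is in hand as a prerequisite, which is implicit in the paper via Lemma \ref{lem:centraliser conjugacy}. I would structure the proof as: (i) introduce $T$ and state the two counting identities; (ii) apply orbit-stabiliser; (iii) collect terms orbit by orbit to obtain the factor $|X/G|$; (iv) conclude.
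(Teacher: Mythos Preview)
Your proof is correct and is the standard double-counting argument for Burnside's Lemma. Note, however, that the paper does not actually supply a proof of this statement: it is quoted with a citation to \cite{gallian} and used as a black box. So there is no paper-proof to compare against; your argument would serve as a self-contained replacement for that external reference.
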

Applying Lemma \ref{conjugacy}, we can split the sum up by conjugacy class. Let $u_i$ be a representative of the $i$th conjugacy class of $W$.

\begin{lemma}[Orbit Counting by Conjugacy] \label{lemma:modburnside}
    Since struts are considered distinct precisely when they are not equivalent under $W$, we can calculate the number of distinct seed struts as
    \[
        \card{X/W} = \frac{1}{\card{W}}\sum_i \card{\fix{X}{u_i}} \cC{W}{u_i} = \sum_i \frac{\card{\fix{X}{u_i}}}{\cC{W}{u_i}}.
    \]
\end{lemma}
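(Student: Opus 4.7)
The plan is to derive this as a direct corollary of Burnside's Lemma (Lemma \ref{lem:burnside}) by regrouping the sum over $W$ according to conjugacy classes. I would first apply Burnside's Lemma to the action $W \acts X$ to obtain
\[
    \card{X/W} = \frac{1}{\card{W}} \sum_{u \in W} \numfix{u}.
\]
Since the conjugacy classes $\Conj{W}{u_i}$ form a partition of $W$, the right-hand sum splits as $\sum_i \sum_{v \in \Conj{W}{u_i}} \numfix{v}$. By the first bullet of Lemma \ref{conjugacy}, every element of $\Conj{W}{u_i}$ fixes the same number of points of $X$ as the chosen representative $u_i$, so the inner sum collapses to $\cconj{W}{u_i} \cdot \numfix{u_i}$. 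This gives the first equality in the statement (with the appropriate conjugacy-class factor in the middle expression).

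To obtain the second equality, I would invoke Lemma \ref{lem:centraliser conjugacy}, which asserts $\card{W} = \cconj{W}{u_i} \cdot \cC{W}{u_i}$ for each representative. Rearranging gives $\cconj{W}{u_i}/\card{W} = 1/\cC{W}{u_i}$, and substituting this into the previous expression cancels the factor of $\card{W}$ and yields the closed form $\sum_i \numfix{u_i} / \cC{W}{u_i}$.

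There is essentially no obstacle here: the lemma is a piece of bookkeeping that combines three earlier results (Lemma \ref{lem:burnside}, the first part of Lemma \ref{conjugacy}, and Lemma \ref{lem:centraliser conjugacy}) into a single formula well-suited to the subsequent enumeration. The only care required is to keep the centraliser $\cC{W}{u_i}$ and the conjugacy class size $\cconj{W}{u_i}$ notationally distinct while performing the substitutions.
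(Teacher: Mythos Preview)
Your proposal is correct and matches the paper's approach exactly: the paper gives only the one-line justification ``Applying Lemma \ref{conjugacy}, we can split the sum up by conjugacy class'' before stating the lemma, and your write-up is the natural expansion of that remark together with Lemma \ref{lem:centraliser conjugacy}. Your derivation also correctly produces the factor $\cconj{W}{u_i}$ in the intermediate step, which is what the middle displayed expression ought to read (the paper's $\cC{W}{u_i}$ there is evidently a typo, since otherwise the second equality would fail).
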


We can substitute in our equations for $\card{\fix{X}{u_i}}$ from Theorem \ref{theorem:fixedstruts}.
\begin{theorem}
    Now let $u_i$ be a representative of the $i$th conjugacy class of $W_I \cap W_J$. In the heteroorbital case, we obtain the following sum by substituting for $\fix{X}{u_i}$ using Theorem \ref{theorem:fixedstruts}:
\[
    \card{X/W}= \sum_i \frac{\cC{W}{u_i}}{\cC{W_I}{u_i} \cC{W_J}{u_i}}.
\]

For homoorbital types we have two sums: one for conjugacy classes which intersect $W_I$ and the other for all those which intersect $\sqrt{W_I}$. Let $u_i$ be a representative of the $i$th conjugacy class of $W_I$ and let $w_j$ be a representative of the $j$th conjugacy class of $\sqrt{W_I}$. We have
\[
    \card{X/W}= \sum_i \left[\frac{\cC{W}{u_i}}{2\cC{W_I}{u_i} \cC{W_I}{u_i}} - \frac{1}{\cC{W_I}{u_i}} \right] 
    + \sum_j \frac{\cC{W}{w_j^2}}{2\cC{W_I}{w_j^2}\cC{W}{w_j}}.
\]
We can simplify the second term because \[
\card{X/W}= \sum_i \frac{\card{W_I}}{\cC{W_I}{u_i}} = \sum_i \cconj{W_I}{u_i} = \card{W_I} \implies \sum_i \frac{1}{\cC{W_I}{u_j}} = 1.
\] 
Hence, we can rewrite the expression for the number of distinct struts as
\[
\card{X/W}=\frac{1}{2} \left[ \sum_i \frac{\cC{W}{u_i}}{\cC{W_I}{u_i}^2} 
+ \sum_j \frac{\cC{W}{w_j^2}}{\cC{W_I}{w_j^2}\cC{W}{w_i}} \right] - 1.
\]
\end{theorem}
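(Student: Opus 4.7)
The plan is to combine three earlier results: Lemma \ref{lemma:modburnside} (which expresses the orbit count as $\sum_i \card{\fix{X}{u_i}}/\cC{W}{u_i}$ over $W$-conjugacy class representatives), Theorem \ref{theorem:fixedstruts} (which computes $\card{\fix{X}{u}}$ in terms of centralisers), and Lemma \ref{prop:shortcut} (which tells us precisely which classes contribute). The first step in both cases is to restrict the Burnside-by-conjugacy sum to conjugacy classes that have representatives in $W_I \cap W_J$ (heterorbital) or $\sqrt{W_I}$ (homoorbital); every other class contributes zero by Lemma \ref{prop:shortcut}.

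For the heterorbital case this reduces to a one-line substitution: replacing $\card{\fix{X}{u_i}}$ by $\cC{W}{u_i}^2/(\cC{W_I}{u_i}\cC{W_J}{u_i})$ and cancelling one factor of $\cC{W}{u_i}$ against the Burnside denominator yields the stated formula directly.

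For the homoorbital case I would decompose $\card{\fix{X}{u}}$ from Theorem \ref{theorem:fixedstruts} according to the two mechanisms by which a strut is fixed: both endpoints individually fixed, contributing $\binom{f_I(u)}{2}$ (nonzero only when the $W$-class of $u$ meets $W_I$), and endpoints swapped, contributing $\tfrac12(f_I(u^2)-f_I(u))$ (nonzero only when the class meets $\sqrt{W_I}$). Splitting the Burnside sum along these two lines, re-indexing each piece over the appropriate representatives, and substituting $f_I(\cdot) = \cC{W}{\cdot}/\cC{W_I}{\cdot}$ from Theorem \ref{thm:important} produces the intermediate two-sum expression containing the $-1/\cC{W_I}{u_i}$ term.

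The final simplification uses the identity $\sum_i 1/\cC{W_I}{u_i} = 1$: multiplying each term by $\card{W_I}$ and invoking Lemma \ref{lem:centraliser conjugacy} turns the sum into $\sum_i \cconj{W_I}{u_i}$, which equals $\card{W_I}$ because conjugacy classes partition the group. This collapses the $-1/\cC{W_I}{u_i}$ contributions into a single $-1$, and factoring out the common $1/2$ yields the final form. The main bookkeeping obstacle I anticipate is aligning the indexing conventions, since Burnside naturally sums over $W$-conjugacy classes whereas the final formula is indexed by classes of $W_I$ and $\sqrt{W_I}$; I would resolve this by choosing, for each relevant $W$-class, a representative actually lying in the required subset, which is possible precisely by the characterisation in Lemma \ref{prop:shortcut}, and then noting that the centraliser sizes $\cC{W_I}{u_i}$ and $\cC{W}{u_i}$ depend only on the class, not on the chosen representative.
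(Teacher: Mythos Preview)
Your proposal is correct and follows essentially the same approach as the paper: substitute the fixed-strut formulas from Theorem \ref{theorem:fixedstruts} into the conjugacy-class form of Burnside's lemma (Lemma \ref{lemma:modburnside}), restrict the sum via Lemma \ref{prop:shortcut}, and simplify the homoorbital case using the partition identity $\sum_i \cconj{W_I}{u_i} = \card{W_I}$. You are in fact more explicit than the paper about the reindexing from $W$-conjugacy classes to classes of $W_I$ and $\sqrt{W_I}$, a step the paper leaves implicit.
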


\subsection{Univalent polypolyhedra}

In this section we will eliminate the number of polypolyhedra which do not satisfy property 2 of the definition of polypolyhedra in Section \ref{key_concepts} by checking the valency, $d$, of the two endpoints, $C_I$ and $C_J$. If either endpoint has valency less than 2, then it is not an intersection point, and so we eliminate that seed strut. We shall refer to the seed strut as $x$ and the set of all struts as $X$. We will use the following two consequences of having a univalent endpoint, which without loss of generality we let be $C_I$, to deduce when this can occur.

\begin{lemma} \label{lem:handshake}
If $d(C_I) = 1$ and $I=J$, then $2\card{X} = [W:W_I]$. But if $d(C_I) = 1$ and $I \neq J$, then $\card{X}=[W:W_I]$.
\end{lemma}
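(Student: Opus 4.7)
The plan is to prove this by a double counting of strut--endpoint incidences, in the spirit of the handshake lemma for graphs. First I would establish two preliminary facts: (i) the set of endpoints of type $C_I$ forms a single orbit of size $[W:W_I]$, and (ii) valency is constant along orbits of endpoints.

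For (i), I would invoke Theorem \ref{thm:parabolic_stab}, which gives $\stab(C_I) = W_I$ and hence a $W$-orbit of size $[W:W_I]$. Because the polypolyhedron is actually generated by $W^*$, I would also need to verify that the $W^*$-orbit of an endpoint on $C_I$ coincides with its $W$-orbit. This holds because $W_I$ contains at least one reflection (since $\card{I} = n - 1 \geq 1$), so $W^* \cap W_I$ has index two in $W_I$, making the two orbit-stabiliser counts agree. Fact (ii) follows because any $u \in W^*$ is a bijection of $X$ carrying struts incident to $e$ to struts incident to $u(e)$, so $d(e) = d(u(e))$.

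I would then split into cases and double count. In the heterorbital case ($I \neq J$), the two endpoints of each strut lie in disjoint $W$-orbits, namely those of $C_I$ and $C_J$, so each strut contributes exactly one incidence on the $C_I$-side. Counting incidences by struts gives $\card{X}$, while counting them by endpoints gives $[W:W_I] \cdot d(C_I)$; setting $d(C_I) = 1$ yields $\card{X} = [W:W_I]$. In the homoorbital case ($I = J$), both endpoints of every strut lie in the single orbit of $C_I$, so each strut contributes two incidences: counting by struts gives $2\card{X}$, counting by endpoints gives $[W:W_I] \cdot d(C_I) = [W:W_I]$ when $d(C_I) = 1$.

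The main delicate point is the interplay between the generating group $W^*$ and the equivalence group $W$: one might initially fear that in the homoorbital case the single $W$-orbit of endpoints splits into two $W^*$-orbits, introducing a spurious factor of $2$. The reflection-containing argument in step (i) is precisely what rules this out, so I expect the proof to be short once that step is pinned down.
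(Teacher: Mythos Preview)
Your proposal is correct and follows essentially the same approach as the paper: both invoke the handshaking lemma (double counting of strut--endpoint incidences), splitting into the homoorbital and heterorbital cases. The paper's proof is terser, simply citing $2\card{E} = \sum_{v} d(v)$ and substituting $d(C_I)=1$, whereas you spell out the supporting facts (orbit size via Theorem~\ref{thm:parabolic_stab}, constancy of valency, and the $W$ versus $W^*$ agreement); the paper defers the $W^*$/$W$ point to the proof of Lemma~\ref{lem:univalent} rather than handling it here.
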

\begin{proof}
    The second condition follows from the handshaking lemma, $2\card{E} = \sum_{v \in V} d(v)$. Letting the struts, $X$, play the part of edges and endpoints that of vertices, we have that if $I=J$ then $2\card{X} = [W:W_I]d(C_I)$. But if $I \neq J$ then $\card{X}=[W:W_I]d(C_I)$. The lemma follows by substituting $1$ for $d(C_I)$.
\end{proof}
    
We will now apply these lemmas to determine how many distinct polypolyhedra have univalent seed struts there are of each orbit-type . We shall assume that $I \neq \emptyset$ and $J \neq \emptyset$. The case where $I = \emptyset$ is dealt with in Lemma \ref{lem:univalent C type}.

\begin{lemma}\label{lem:univalent}
    A seed strut has univalent endpoints precisely when there exists some  $w \in W \backslash W_I$ such that $wIw^{-1} \subset S$.
\end{lemma}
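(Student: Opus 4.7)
My plan is to characterize univalence by analyzing the orbit of the non-$C_I$ endpoint under the stabilizer of $a$. Fix $a\in C_I$ and let $b$ be the other endpoint of the seed strut. The struts incident to $a$ are exactly $\{a,r(b)\}$ for $r\in \stab(a)\cap W^{*}=W_I\cap W^{*}$, together with, in the homoorbital case, symmetric contributions from rotations that swap $a$ and $b$; both reduce to the same incidence condition. Hence $d(C_I)=1$ if and only if $W_I\cap W^{*}$ fixes $b$ pointwise.

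The geometric key is that, for $|I|=n-1$, the pointwise fixed set of $W_I\cap W^{*}$ in $\mathbb{R}^n$ is exactly the axis line through the origin on which $C_I$ lies. Indeed $W_I$ fixes this line pointwise by the definition of $C_I$, and its index-two rotation subgroup has the same fixed set because it acts faithfully on the orthogonal codimension-one subspace with only the origin as a fixed point (a direct check for the finite Coxeter groups in play). Since $|J|=n-1$, the open facet $vC_J$ containing $b$ is one-dimensional, and the only one-dimensional open facets sitting inside this axis line are $C_I$ itself and possibly its opposite ray, both of which have stabilizer $W_I$. In either case $vW_Jv^{-1}=W_I$.

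The final step converts $vW_Jv^{-1}=W_I$ into the statement of the lemma. By the standard fact that any two generating sets of reflections of minimal cardinality for a parabolic subgroup of a Coxeter group are related by inner conjugation in that subgroup, I can adjust $v$ within $vW_J$ so that $w:=v^{-1}$ satisfies $wIw^{-1}=J\subset S$. To show $w\notin W_I$, note that $w\in W_I$ would give $wIw^{-1}\subseteq W_I\cap S=I$, forcing $J=I$ and contradicting heterorbitality; in the homoorbital case, $w\in W_I$ would fix $C_I$ pointwise, making $b=a$ and contradicting distinctness of endpoints. The converse is a direct construction: given $w\in W\setminus W_I$ with $K:=wIw^{-1}\subset S$, the facet $w^{-1}C_K$ has stabilizer $W_I$, and any point $b$ of $w^{-1}C_K$ distinct from $a$ lies on the axis line, is fixed by $W_I\cap W^{*}$, and yields a univalent seed strut $\{a,b\}$. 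I expect the main obstacle to be the Coxeter-theoretic adjustment producing $wIw^{-1}=J$ exactly rather than merely $vW_Jv^{-1}=W_I$; the subsidiary bookkeeping between the two rays of the axis and the homoorbital/heterorbital split adds detail but is routine.
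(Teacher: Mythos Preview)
Your argument is essentially correct and reaches the same conclusion, but by a genuinely different route from the paper. The paper does not count struts through $a$ directly; instead it applies the handshake identity (Lemma~\ref{lem:handshake}) to obtain $W_I=\stab(x)$ in the heteroorbital case and $[\stab(x):W_I]=2$ in the homoorbital case. From $W_I\leq\stab(x)$ it follows at once that $W_I$ itself (not merely $W_I\cap W^{*}$) fixes $b$, so $b$ lies on the axis $\bigcap_{i\in I}H_i$ with no further geometric input. Your route trades this global counting step for a local orbit computation, at the cost of needing the auxiliary fact that the pointwise fixed set of $W_I\cap W^{*}$ coincides with that of $W_I$; this holds for $|I|\geq 2$ but is an extra ingredient the paper's argument avoids entirely.

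On the other hand, you treat two points more carefully than the paper does. First, you make the converse direction explicit, constructing the univalent strut from a given $w$; the paper's proof argues only the forward implication. Second, you flag the passage from $vW_Jv^{-1}=W_I$ to an element $w$ with $wIw^{-1}=J$ exactly, invoking conjugacy of simple systems inside a parabolic; the paper simply asserts ``Therefore $wIw^{-1}=J\subset S$'' without justification. Your handling of the homoorbital swap contributions is terse but ultimately sound: once $b$ is forced onto the axis it must equal $-a$, and then any $r\in W^{*}$ with $r(b)=a$ satisfies $r(a)=-r(-a)=b$ by linearity, so no additional struts through $a$ arise.
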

\begin{proof}
    Suppose $C_I$ is a univalent endpoint. We have
    \[[W:\stab(x)] = [W^* : \stab(x)^*] = \card{X}.\]
    Then, by Lemma \ref{lem:handshake}, in the homoorbital case we have,
    \[2[W:\stab(x)] = [W:W_I] \implies [W_I:\stab(x)]=2\]
    And in the heteroorbital case we have,
    \[[W:\stab(x)] = [W:W_I] \implies W_I = \stab(x)\]
    Because $W_I \leq \stab(x)$ we have that $x$, along with both of its endpoints, is contained in the axis $\bigcap_{i \in I} H_i$. Therefore $wIw^{-1} = J \subset S$ for some $w \in W$. Moreover, if $I = J$, since $2[W:\stab(x)] = [W:W_I]$ and $W_I < \stab(x)$, we have $\stab(x) \cong S_2 \times W_I$. The $S_2$ component corresponds to a flip of the axis, which is a non-trivial element of $W \backslash W_I$. 
\end{proof}

\begin{definition}[Dual Coxeter Group]
Define the \emph{dual} of a Coxeter group $W$ to be the the image of the map $s_i \mapsto s_{n-i+1}$. We denote the dual of a subgroup $H$ by $\overline{H}$. 

The dual map is an isomorphism of graphs from $G(W)$ to its mirror image. Clearly, the dual map is a bijection, but it is only a homomorphism---hence automorphism---if $m(i,j)=m(n-i+1,n-j+1)$. Equivalently, the dual map is an autormphism of $G(W)$. Precisely when the dual map is an automorphism we shall say that $G$ is self-dual. For example, \dynkin[Coxeter] A3 is self-dual while \dynkin[Coxeter] B3 is not.
    
\end{definition}

\begin{theorem}\label{thm:univalent}
    There is exactly one seed strut with a univalent endpoint precisely when $\Z(W)$ is non-trivial and $I=J$, or $\Z(W)$ is trivial and $I = \overline{J}$.
\end{theorem}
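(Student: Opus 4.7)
The plan is to invoke Lemma \ref{lem:univalent} and then use the longest element $w_0 \in W$ to pin down the orbit-type of the univalent strut via its action on facets. From the proof of Lemma \ref{lem:univalent}, a univalent seed strut must lie along the axis $L = \bigcap_{i \in I} H_i$, with its two endpoints being the positive ray $C_I$ and the negative ray $-C_I$ (under the radius normalization of Subsection \ref{subsec:orbits}). Hence the orbit-type of such a strut is forced: one endpoint has type $I$, and the other carries whichever type is borne by the $W$-orbit of $-C_I$, so the problem reduces to identifying that type and then checking uniqueness.

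To identify this type, I would appeal to the standard fact that the longest element satisfies $w_0 C_\emptyset = -C_\emptyset$ and that conjugation by $w_0$ restricts to an involution $\iota: S \to S$, the \emph{opposition involution}. Propagating this through the walls of the fundamental chamber yields $w_0 C_I = -C_{\iota(I)}$, so $-C_I$ lies in the $W$-orbit of $C_{\iota(I)}$ and the orbit-type must satisfy $J = \iota(I)$. Taking $w = w_0$ in Lemma \ref{lem:univalent} simultaneously supplies existence, since $w_0 I w_0^{-1} = \iota(I) \subset S$ and $w_0 \notin W_I$ whenever $I \subsetneq S$ (the longest element of $W_I$ has strictly smaller length in $W$ than $w_0$).

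It remains to evaluate $\iota$. When $\Z(W)$ is non-trivial, the longest element is $w_0 = -I$, so $\iota$ is the identity on $S$ and $J = I$. When $\Z(W)$ is trivial, $w_0 \neq -I$ and $\iota$ is a non-trivial diagram automorphism; a case-by-case check on the groups of Table \ref{tab:irr class} with trivial center (namely $\alpha_n$ for $n \geq 2$ and $\dih_d$ for odd $d$) shows $\iota$ coincides with the dual map $s_i \mapsto s_{n-i+1}$, giving $J = \overline{I}$, equivalently $I = \overline{J}$.

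Uniqueness is then immediate, because the univalent seed strut is determined by the pair $\{C_I, -C_I\}$ along $L$ and all axes in the $W$-orbit of $L$ yield $W$-equivalent struts; conversely, any orbit-type outside the two stated cases fails to equal $(I, \iota(I))$ and therefore admits no univalent strut. The main technical obstacle will be the geometric identity $w_0 C_I = -C_{\iota(I)}$, which relies on the standard Coxeter-theoretic analysis of $w_0$'s action on positive roots, together with the case-by-case verification that $\iota$ matches the dual map defined in this paper for the relevant trivial-center groups.
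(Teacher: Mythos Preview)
Your argument is correct and takes a genuinely different route from the paper's. The paper starts from an arbitrary $w$ supplied by Lemma~\ref{lem:univalent} and argues directly that the inner automorphism $\phi: u \mapsto wuw^{-1}$ must extend from $I$ to a permutation of all of $S$, i.e.\ a graph automorphism of $G(W)$; it then splits into cases according to whether $\Z(W)$ is trivial, and in the trivial-centre case constructs the required $w$ by hand for $\alpha_n$ and for $\dih_d$ with $d$ odd. Your approach instead fixes $w = w_0$ from the outset and reads off the orbit-type of $-C_I$ via the opposition involution $\iota$, using the standard dichotomy that $w_0 = -1$ (hence $\iota = \mathrm{id}$) precisely when $\Z(W)$ is non-trivial, and that for the remaining groups of Table~\ref{tab:irr class} the opposition involution coincides with the dual map. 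What this buys you is a cleaner, more conceptual argument that avoids both the somewhat delicate step of extending $\phi$ to all of $S$ and the explicit case-by-case construction of $w$; the price is reliance on structural facts about $w_0$ and $\iota$ that the paper does not otherwise develop, so your proof is less self-contained. Both approaches ultimately rest on the same case-by-case identification for the trivial-centre groups, and both establish uniqueness through the observation that the univalent strut is forced to be $\{C_I, -C_I\}$ on the axis.
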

\begin{proof}
Suppose a $(W,I,J)$ seed strut has univalent endpoints. By Lemma \ref{lem:univalent}, there is some $w \in W \backslash W_I$ such that $wIw^{-1} \subset S$. Let us denote the automorphism $u \mapsto wuw^{-1}$ by $\phi$.

We first prove that $\phi(S)=S$. Since $\card{I}=n-1$, let $s_k$ denote the single element of $S \backslash I$ and $s_l$ denote the single element of $S \backslash \phi(I)$. Since 
\[(\phi(s_k)\phi(s_i))^{m(k,i)}=(\phi(s_l)\phi(s_i))^{m(k,i)}=1 \text{ for each } 1 \leq i \leq n,\] 
we have $\phi(s_k)s_l = \phi(s_k)\phi(s_k) = 1$, and hence $\phi(s_k) = s_l \in S$ and $\phi(S) = S$. Hence $\phi$ is a graph automorphism of $G(W)$, meaning $\phi$ either fixes $S$ pointwise or maps $S$ to $\overline{S}$. 

    \noindent \emph{Case 1:} $\Z(W)$ is non-trivial. Therefore $\phi$ fixes $S$ pointwise, so $I=J$ (homoorbital). We have $wsw^{-1}=s$ for all $s \in S$, so $w \in \Z(W)$. Since $\id \in W_I$, $\Z(W)$ must be non-trivial. 
    
    Conversely, if $w \in \Z(W) \backslash \{\id\}$ then $w \in \stab(x)$ and if $I = J$ then $W_I < \stab(x)$, so $w \in \stab(x) \backslash W_I$. Hence in this case a seed strut has univalent endpoints precisely when $I=J$.

    \noindent \emph{Case 2:} $\Z(W)$ is trivial. Of the groups in Table $\ref{tab:irr class}$ with trivial centre, all are self-dual, so we shall assume that $W$ is self-dual. Therefore $\phi$ maps $S$ to its dual, and by extension $I = \phi(J) = \overline{J}$. 
    
    Conversely, if $I = \overline{J}$ and $G(W)$ is self-dual, then we can construct $w$ in the following way. The self-dual groups fall into two classes, $\dih_d$ and $\alpha_n$; we shall construct $w$ explicitly in both cases. If $W=\dih_d$, $d$ odd, then let $w=(s_1 s_2)^{\frac{d-1}{2}}$. We can then confirm that $w$ is dual map,
    \[ws_1w^{-1} = (s_1 s_2)^{\frac{d-1}{2}}s_1(s_2 s_1)^{\frac{d-1}{2}}=(s_1 s_2)^{d-1}s_1 = (s_1 s_2)^{d}s_2 = s_2\].
    
    Since $W = \alpha_n \cong S_{n+1}$, we can represent $s_i \in S$ as the transposition $(i,i+1)$. Then we let $w = \prod_{1 \leq i \leq \frac{n}{2}}(i,n-i+1)$. Since $w$ is a product of disjoint transpositions, $w = w^{-1}$. We confirm that conjugation by $w$ is equivalent to the dual map by taking an arbitrary element $s_k \in S$.  
    \begin{align*}
        & ws_k w^{-1} = ws_k w = \big(\prod_{1 \leq i \leq \frac{n}{2}}(i,n-i+1) \big) (k, k+1) \big( \prod_{1 \leq i \leq \frac{n}{2}}(i,n-i+1) \big) \\
        &= (k, n-k+1)(k+1, n-k)(k, k+1)(k, n-k+1)(k+1, n-k)  \\
        &= (n-k+1, n-k) = \overline{s_k}.
    \end{align*}

    Hence in this case a seed strut has univalent endpoints precisely when $I = \overline{J}$. 
\end{proof}
The groups with from Table \ref{tab:irr class} with non-trivial centre are $\beta_n$, $H_3$, and $\dih_d$ where $d$ is even; the only groups with trivial centre are $\alpha_n$ and $\dih_d$ where $d$ is odd \cite{coxeter}. We can use theorem \ref{thm:univalent} to determine for which orbit-types there is precisely one seed strut with univalent endpoints and for which there are none. 

\subsubsection{$\emptyset$ Orbit-types}
There can be more univalent orbit-types when $I = \emptyset$. Most notably, any heteroorbital type.

\begin{lemma}\label{lem:univalent C type}
    If $I= \emptyset$ then all seed struts for which $I \neq J$ have a univalent endpoint and $U(W^*,\emptyset,\emptyset)$ is equal to the number of order 2 elements of $W^*$.
    
\end{lemma}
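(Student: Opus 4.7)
The plan is to handle the two assertions separately, and in both cases to exploit the fact that the $C_\emptyset$ orbit has trivial point-stabilisers $W_\emptyset=\{e\}$, so $W^*$ acts essentially freely on endpoints.

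For the heteroorbital half (the case $I=\emptyset$ with $I\neq J$), let $\{a,b\}$ be any seed strut with $a\in C_\emptyset$ and $b\in C_J$. Every strut in the polypolyhedron has the form $\{w(a),w(b)\}$ for some $w\in W^*$, and it is incident at $a$ exactly when $w(a)=a$ or $w(b)=a$. The first forces $w=e$ by triviality of $\stab(a)$; the second is impossible because $b$ lies in a different $W$-orbit from $a$ (and hence in a different $W^*$-orbit). So the seed strut is the unique strut incident at $a$, and $a$ has valency one.

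For the homoorbital half I would fix a basepoint $a_0\in C_\emptyset$ and write any seed strut whose endpoints both lie in the $W^*$-orbit of $a_0$ as $\{a_0,r(a_0)\}$ for a unique $r\in W^*\setminus\{e\}$. The struts incident at $a_0$ in the resulting polypolyhedron correspond to $w=e$ and to the unique $w\in W^*$ with $wr(a_0)=a_0$, namely $w=r^{-1}$; these two struts coincide precisely when $r(a_0)=r^{-1}(a_0)$, which by triviality of $\stab(a_0)$ is equivalent to $r^2=e$. Hence such a seed strut is univalent exactly when $r$ is an involution of $W^*$.

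The remaining step is to count equivalence classes. Two parametrised seed struts $\{a_0,r(a_0)\}$ and $\{a_0,r'(a_0)\}$ are $W$-equivalent iff some $w\in W$ sends one unordered pair to the other, and triviality of $\stab(a_0)$ forces either $r=r'$ or $r=(r')^{-1}$. For an involution $r=r^{-1}$, so each involution of $W^*$ gives exactly one equivalence class, yielding $U(W^*,\emptyset,\emptyset)$ equal to the number of order-two elements of $W^*$. The main obstacle is this last counting step — specifically, ensuring that the $r\sim r^{-1}$ relation captures all identifications among the univalent seed struts and that distinct involutions of $W^*$ never collapse under $W$-equivalence.
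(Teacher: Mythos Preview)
Your argument is correct, and it differs from the paper's in a useful way. The paper routes both cases through the handshaking lemma (Lemma~\ref{lem:handshake}): in the heteroorbital case it first shows $\stab(x)=\{e\}$, deduces $|X|=[W^*:W_\emptyset]$, and then reads off $d(C_\emptyset)=1$; in the homoorbital case it obtains $d(C_\emptyset)=2/|\stab(x)^*|$ and identifies $|\stab(x)^*|=2$ with the strut being flipped, concluding somewhat tersely that the count matches the number of involutions in $W^*$. You instead argue directly from the definition of valency in Case~1 and, in Case~2, parametrise seed struts by $r\in W^*\setminus\{e\}$ and work out the $W$-equivalence explicitly.

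The step you flag as the ``main obstacle'' is not one. Because $\stab_W(a_0)=W_\emptyset=\{e\}$, any $w\in W$ carrying $\{a_0,r(a_0)\}$ to $\{a_0,r'(a_0)\}$ must satisfy $w(a_0)\in\{a_0,r'(a_0)\}$; the first option forces $w=e$ and hence $r'=r$, while the second forces $w=r'$ (so in particular $w\in W^*$) and then $r'r(a_0)=a_0$ gives $r'=r^{-1}$. Thus the relation $r\sim r^{-1}$ really does capture \emph{all} $W$-identifications, and since an involution satisfies $r=r^{-1}$, distinct involutions of $W^*$ yield distinct equivalence classes. Your treatment of this final count is in fact more explicit than the paper's, which leaves the bijection between involutions and univalent classes largely to the reader.
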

\begin{proof}
    \emph{Case 1:} $I \neq J$. Then if $w \in \stab(x)$, $w$ cannot transpose the endpoints of $x$, so $w$ must fix both of endpoints. One such endpoint is $C_{\emptyset}$, so $w = \id$. Therefore $\stab(x) = \{\id\}$. Hence $\card{X} = [W^*: \{\id\}] = \card{W^*}$. By Lemma \ref{lem:handshake} we have, 
    \[
    \card{X} = [W^*:W_{\emptyset}]d(C_{\emptyset}) \implies d(C_{\emptyset})= \frac{\card{X}}{[W^*:W_{\emptyset}]}= \frac{\card{W^*}}{\card{W^*}}=1.
    \] 
    Hence $C_\emptyset$ is univalent. 

    \emph{Case 2:} $I = J$. By Lemma \ref{lem:handshake}, 
    \[
    2 [W^*:\stab(x)^*]=[W^*:W_\emptyset]d(C_\emptyset) \implies d(C_\emptyset) = \frac{2}{\card{\stab(x)^*}}.
    \]
    Therefore $C_\emptyset$ is univalent precisely when $\card{\stab(x)^*} = 2$. This is the same as a strut being flipped, as in Section \ref{sec:fixed struts}. Therefore we know that $U(W^*, \emptyset, \emptyset)$ is equal to the number of order $2$ elements of $W^*$.
\end{proof}

\section{Results}\label{sec:main result}

We now have a pair of formulae for the number of distinct polypolyhedra for a given orbit-type, $(W, I, J)$ in terms of the centralisers of elements in $W$, $W_I$, and $W_J$. Let $U(W,I,J) \in \{0,1\}$ denote the number of univalent struts, which we determine using Theorem \ref{thm:univalent}. We calculate $P(W,I,J)$ as $\card{X/W} - U(W,I,J)$.

\begin{theorem}[Main Result]\label{thm:main result}
Let $u_i$ be a representative of the $i$th conjugacy class of $W_I \cap W_J$. For the heteroorbital case, we have
\begin{equation*}
    P(W,I,J) = \sum_i \frac{\cC{W}{u_i}}{\cC{W_I}{u_i} \cC{W_J}{u_i}} - U(W,I,J).
\end{equation*}
Let $u_i$ be a representative of the $i$th conjugacy class of $W_I$ and let $w_j$ be a representative of the $j$th conjugacy class of $\sqrt{W_I}$. In the homoorbital case, we have
\begin{equation*}
    P(W,I,J) = \frac{1}{2} \left[ \sum_i \frac{\cC{W}{u_i}}{\cC{W_I}{u_i}^2} 
+ \sum_j \frac{\cC{W}{w_j^2}}{\cC{W_I}{w_j^2}\cC{W}{w_j}} \right] - 1 - U(W,I,J).
\end{equation*}
\end{theorem}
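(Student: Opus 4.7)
The plan is to assemble the formula by combining the pieces that have already been established: Burnside's lemma, the conjugacy-invariance of fixed set sizes (Lemma \ref{conjugacy}), the fixed-strut count from Theorem \ref{theorem:fixedstruts}, and the filtering criterion from Lemma \ref{prop:shortcut}. The final subtraction of $U(W,I,J)$ simply removes the at-most-one univalent seed strut identified in Theorem \ref{thm:univalent}. So the proof is essentially a bookkeeping proof: almost all of the work has been done, and what remains is to write the substitution cleanly in each of the two cases.

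First I would apply Burnside's lemma (Lemma \ref{lem:burnside}) to $W \acts X$, where $X$ is the set of $(W,I,J)$ struts, and immediately pass to the conjugacy-class grouping of Lemma \ref{lemma:modburnside}:
\[
    \card{X/W} = \sum_i \frac{\card{\fix{X}{u_i}}}{\cC{W}{u_i}},
\]
where the $u_i$ range over conjugacy-class representatives of $W$. Lemma \ref{prop:shortcut} lets me restrict the index $i$ to conjugacy classes that actually fix some strut: in the heteroorbital case those are the classes meeting $W_I \cap W_J$, and in the homoorbital case those are the classes meeting $\sqrt{W_I}$ (which contains $W_I$). Then I would substitute the expressions for $\card{\fix{X}{u_i}}$ from Theorem \ref{theorem:fixedstruts}. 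In the heteroorbital case this substitution is a one-liner and immediately yields $\sum_i \cC{W}{u_i}/(\cC{W_I}{u_i}\cC{W_J}{u_i})$; subtracting $U(W,I,J)$ gives the first formula.

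The homoorbital case requires slightly more care because Theorem \ref{theorem:fixedstruts} has two contributions—pairs of fixed endpoints and flipped struts—corresponding respectively to conjugacy classes of $W_I$ (for $u$) and of $\sqrt{W_I}\setminus W_I$ (for $u$ with $u^2\in W_I$). I would split the Burnside sum according to these two contributions. The ``pair of fixed endpoints'' term contributes
\[
    \sum_i \frac{1}{2\cC{W_I}{u_i}}\left(\frac{\cC{W}{u_i}}{\cC{W_I}{u_i}} - 2\right)
    = \frac{1}{2}\sum_i \frac{\cC{W}{u_i}}{\cC{W_I}{u_i}^2} - \sum_i \frac{1}{\cC{W_I}{u_i}},
\]
and the telescoping identity $\sum_i 1/\cC{W_I}{u_i} = 1$, which follows from applying Lemma \ref{lem:centraliser conjugacy} and the class equation $\sum_i \cconj{W_I}{u_i} = \card{W_I}$, eats the extra constant. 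The ``flipped strut'' term gives the second sum over representatives $w_j$ of conjugacy classes of $\sqrt{W_I}$, after dividing by $\cC{W}{w_j}$ as required by Lemma \ref{lemma:modburnside}. Collecting everything and subtracting $U(W,I,J)$ yields the homoorbital formula, where the $-1$ absorbs the constant from the telescoping identity.

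The main conceptual subtlety—rather than a computational obstacle—is making sure the homoorbital index sets are disjoint and exhaustive: I would verify explicitly that every conjugacy class of $W$ whose contribution to $\card{\fix{X}{u}}$ is nonzero is counted exactly once, by checking that the ``fixed pair'' contribution comes precisely from $u\in W_I$ while the ``flipped'' contribution comes from $u\in \sqrt{W_I}$ with $u\notin W_I$ (when $u\in W_I$ the flipped term is already accounted for through the identity $f_I(u^2)=f_I(u)$ since $u=u^2\cdot u^{-1}$ lies in $W_I$ only for involutions; this bookkeeping is where a sign error is easiest to make). Once this is confirmed, the proof is complete.
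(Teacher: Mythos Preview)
Your approach is the paper's approach: group Burnside by conjugacy class, restrict the index set via Lemma~\ref{prop:shortcut}, substitute Theorem~\ref{theorem:fixedstruts}, simplify the constant via $\sum_i 1/\cC{W_I}{u_i}=1$, and subtract $U(W,I,J)$. The middle two paragraphs of your sketch reproduce exactly the derivation the paper gives just before stating Theorem~\ref{thm:main result}.

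Your last paragraph, however, is confused and should be dropped. There is no disjointness requirement on the two index sets. The homoorbital decomposition you are using is
\[
\card{\fix{X}{u}}=\underbrace{\tfrac{1}{2}f_I(u)\bigl(f_I(u)-2\bigr)}_{\text{supported on }\Conj{W}{u}\cap W_I\neq\emptyset}
\;+\;
\underbrace{\tfrac{1}{2}f_I(u^2)}_{\text{supported on }\Conj{W}{u}\cap\sqrt{W_I}\neq\emptyset},
\]
and since $W_I\subseteq\sqrt{W_I}$ the supports overlap. For $u$ conjugate into $W_I$ both pieces contribute, and that is exactly what the theorem's two sums record; there is no double counting because the sums collect different additive pieces of $\card{\fix{X}{u}}$, not different conjugacy classes. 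Your earlier claim that the flipped contribution ``corresponds to $\sqrt{W_I}\setminus W_I$'' is therefore wrong, and the purported identity $f_I(u^2)=f_I(u)$ for $u\in W_I$ is false in general (take $u$ of order~$3$ in $W_I=\alpha_2$). Simply delete this check: the second sum ranges over all of $\sqrt{W_I}$, as you correctly stated in the preceding sentence, and nothing further needs to be verified.
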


\subsection{Examples}

We will now show some worked examples of Theorem \ref{thm:main result} before presenting the number of polypolyhedra for a selection of rank 3 groups on which Lang performed his enumeration \cite{lang_2016}.

\begin{example}[$\alpha_4, \{s_1,s_2,s_3\}, \{s_1,s_2,s_4\}$ ]
This is a heteroorbital type, with $W= \alpha_4$ $W_I = \alpha_3$, and $W_J= \alpha_2 \times S_2$. $G(W)$ is \dynkin[Coxeter] A4, $G(W_I)$ is \dynkin[Coxeter] A3, and $G(W_J)$ is \dynkin[Coxeter] A2 \dynkin[Coxeter] A1.

By proposition \ref{prop:shortcut} we need only consider the conjugacy classes of $\alpha_3 \cap (\alpha_2 \times S_2)$. These are equivalent to $S_4$ and $S_3 \times S_2$, so our conjugacy classes are either in $S_3$ or $S_2 \times S_2$. 
    
In Table \ref{tab:ex alpha}, we choose representatives of each conjugacy class in $\alpha_3 \cap (\alpha_2 \times S_2)$ and calculate the size of the centralisers of these representatives using Equation \ref{eq:alpha}. 

\begin{table}[ht]
\begin{center}
\begin{tabular}{|c|c|c|c|c|}
\hline
Class Rep., $w$ & $\id$ & $(12)$ & $(123)$ & $(12)(34)$ \\
\hline
$\C{W}{w}$                     & 120   & 12     & 6       & 8          \\
\hline
$\C{\alpha_3}{w}$            & 24    & 4      & 3       & 8          \\
\hline
$\C{\alpha_2 \times \alpha_1}{w} $            & 12    & 3      & 6       & 4 \\
\hline
\end{tabular}
\caption{Table of centralisers in $\alpha_4$.}\label{tab:ex alpha}
\end{center}
\end{table}

The equation for the number of polypolyhedra of a heteroorbital type is given by
\[
\sum_i \frac{\cC{W}{u_i}}{\cC{W_I}{u_i} \cC{W_J}{u_i}} - U(W,I,J).
\]

Since $\alpha_4$ has trivial centre and $I \neq \overline{J}$, by Theorem \ref{thm:univalent} there can be no univalent endpoints, so $U(W,I,J)=0$. Evaluating the sum we obtain
\[
    P(\alpha_4, \{s_1, s_2, s_3\}, \{s_1, s_2, s_4\}) = \frac{120}{12 \cdot 24} + \frac{12}{3 \cdot 4} + \frac{6}{6 \cdot 3} + \frac{8}{4 \cdot 8} - 0 = 2.
\]
Thus there are precisely 2 non-univalent polypolyhedra with orbit-type $(\alpha_4, \{s_1,s_2,s_3\}, \{s_1,s_2,s_4\})$.
      
\end{example}

\begin{example}[$H_3, \{s_1, s_2\}, \{s_1, s_2\}$]
    This is a homoorbital type, with $W= H_3$ and $W_I = \dih_5$. $G(W)$ is \dynkin[Coxeter] H3 and $G(W_I)$ is \dynkin[Coxeter,gonality=5] I{}.

    By proposition \ref{prop:shortcut} we need only consider the conjugacy classes of $\sqrt{\dih_5}$. It turns out that $\sqrt{\dih_5} = \dih_5$. In Table \ref{tab:ex H}, we choose representatives of each conjugacy class and calculate the size of the centralisers of these representatives using Theorem \ref{subsec:hconj}.

    \begin{table}[ht]
    \begin{center}
    \begin{tabular}{|c|c|c|c|c|}
    \hline
    Class Rep., $w$ & $\id$ & $(12)(23)$ & $(12345)$ & $(13524)$ \\
    \hline
    $\C{W}{w}$                      & 120   & 8          & 10        & 10        \\
    \hline
    $\C{\dih_5}{w}$             & 10    & 2          & 5         & 5         \\
    \hline
    $w^2$                       & $\id$ & $\id$      & $(13524)$ & $(12345)$ \\
    \hline
    $\C{W}{w^2}$                    & 120   & 120        & 10        & 10        \\
    \hline
    $\C{ \dih_5 }{w^2}$             & 10    & 10         & 5         & 5        \\
    \hline
    \end{tabular}
    \caption{Table of centralisers in $H_3$.}\label{tab:ex H}
    \end{center}
    \end{table}

    The equation for the number of polypolyhedra is
\[
    \frac{1}{2} \left[ \sum_i \frac{\cC{W}{u_i}}{\cC{W_I}{u_i}^2} 
+ \sum_j \frac{\cC{W}{w_j^2}}{\cC{W_I}{w_j^2}\cC{W}{w_j}} \right] - 1 - U(W,I,J).
\]
    Since $H_3$ has non-trivial centre and $I=J$, by Theorem \ref{thm:univalent}, there is precisely one possible polypolyhedron with univalent endpoints, so $U(W,I,J)=1$. Evaluating the sum we obtain,
\begin{align*}
    & P(H_3, \{s_1, s_2\}, \{s_1, s_2\}) & \\
    & = \frac{1}{2} \left[ \frac{120}{10^2} + \frac{8}{2^2} + \frac{10}{5^2} + \frac{10}{5^2} + \frac{120}{10 \cdot 120} + \frac{120}{10 \cdot 8} + \frac{10}{5 \cdot 10} + \frac{10}{5 \cdot 10}\right] - 1 - 1 = 1.
\end{align*}

Thus there is precisely 1 non-univalent polypolyhedron with orbit-type $(H_3, \{s_1, s_2\}, \{s_1, s_2\})$.
\end{example}

\begin{example}[$\beta_3, \emptyset, \emptyset$]
This is a homoorbital type, with $W= \beta_3$ and $W_I = \{ \id \}$. 

By proposition \ref{prop:shortcut} we need only consider the conjugacy classes of $\sqrt{\{\id\}}$. In Table \ref{tab:ex beta}, we choose representatives of each conjugacy class and calculate the size of the centralisers of these representatives using Equation \ref{eq:beta conj}. 

\begin{table}[ht]
\begin{center}
\begin{tabular}{|c|c|c|c|c|c|c|}
\hline
Class Rep., $w$ & $\id$ & $(000,(12))$ & $(001,(12))$ & $(001, \id)$ & $(011, \id)$ & $(111, \id)$ \\
\hline
$\C{W}{w}$                & 48    & 8            & 8            & 16           & 16           & 48           \\
\hline
$\C{\{\id\}}{w}$          & 1     & 0            & 0            & 0            & 0            & 0            \\
\hline
$w^2$                     & $\id$ & $\id$        & $\id$        & $\id$        & $\id$        & $\id$        \\
\hline
$\C{W}{w^2}$              & 48    & 48           & 48           & 8            & 8            & 16           \\
\hline
$\C{\{\id\}}{w}$          & 1     & 1            & 1            & 1            & 1            & 1            \\
\hline
\end{tabular}
\caption{Table of centralisers in $\beta_3$.}\label{tab:ex beta}
\end{center}
\end{table}

By Lemma \ref{lem:univalent C type}, $U(\beta_3,\emptyset, \emptyset)$ is equal to sum of the number of elements in each conjugacy class with order 2. Therefore we have
\begin{align*}
    & U(\beta_3,\emptyset, \emptyset) = \cconj{\beta_3}{(000,(12))} + \cconj{\beta_3}{(001,(12))} \\
    & + \cconj{\beta_3}{(001,\id} + \cconj{\beta_3}{(011,\id)} + \cconj{\beta_3}{(111,\id)} \\
    &= 8 + 8 + 3 + 3 + 1 = 23
\end{align*}
\[
    \frac{1}{2} \left[ \sum_i \frac{\cC{W}{u_i}}{\cC{W_I}{u_i}^2} + \sum_j \frac{\cC{W}{w_j^2}}{\cC{W_I}{w_j^2}\cC{W}{w_j}} \right] - 1 - U(W,I,J).
\]
Evaluating this sum we obtain,
\[
    P(\beta_3, \emptyset, \emptyset) = 
    \frac{1}{2} \left[ \frac{48}{1^2} + \frac{48}{48} + \frac{48}{8} + \frac{48}{8} + \frac{48}{16} + \frac{48}{16} + \frac{48}{48} \right] - 1 - 23 = 13.
\]

Thus there are precisely 23 non-univalent polypolyhedra with orbit-type $(\beta_3,\emptyset, \emptyset)$.

\subsection{Number of polypolyhedra with groups of rank 3}

Table \ref{tab:rank 3 non-uni} lists the number of non-univalent polypolyhedra of each orbit-type where $W$ is one of $\dih_6$, $\alpha_3$, $\beta_3$, or $H_3$. Tables \ref{tab:rank 3 raw} and \ref{tab:rank 3 uni} list the total number of polypolyhedra and number of univalent polypolyhedra respectively. 

The data in Table \ref{tab:rank 3 raw} corresponds to the values which Lang's programme computed to obtain the total number of $188$ distinct possible polypolyhedra across these groups \cite[p.~8]{lang_2016}. These are calculated using Theorem \ref{theorem:fixedstruts}. In order for our table to comparable with Lang's work, we had to account for the fact that his $\emptyset$ orbits are computed using only $W^*$, which means that they have half as many elements \footnote{If $I \neq \emptyset$, $[W_I:W_I^*] = [W:W^*]=2$, so $W^*/W_I^* = W/W_I$. In contrast, $[\emptyset: \emptyset^*]=1$, so $2\card{W^*/W_\emptyset} = \card{W/W_\emptyset}$.}. 

We also had to add rows for what Lang refers to as \emph{quasi-homoorbital} types, which describe struts whose endpoints have the same stabiliser, but due to being at different radial distances from the origin, are not in the same orbit. Quasi-homoorbital types have been omitted from the present paper because they have the same algebraic properties as homorbital types. Consider the map which scales each endpoint to have radial distance $1$. The result is a either a homoorbital polypolyhedron or, when the struts were originally between paired endpoints, no polypolyhedron at all. This map is clearly a bijection to the set of homoorbital polypolyhedra together with the degenerate ``non-polypolyhedron''. Hence the number of quasihomoorbital $(W,I,I)$ polypolyhedra is always precisely one more than the number of homoorbital $(W,I,I)$ polypolyhedra. Thus it is unneccessary to calculate the number of quasihomoorbital polypolyhedra seperately. 

\begin{table}[ht]
\begin{center}
\begin{tabular}{|c|c|c|c|c|}
\hline
                            & $\text{Dih}_6$ & $\alpha_3$ & $\beta_3$ & $H_3$ \\
\hline
$\{s_1,s_2\},\{s_1,s_2\}$ & 3              & 1          & 2         & 3     \\
\hline
(Quasi-homoorbital)         & 4              & 2          & 3         & 4     \\
\hline
$\{s_1,s_2\},\{s_1,s_3\}$ & 3              & 2          & 3         & 5     \\
\hline
$\{s_1,s_2\},\{s_2,s_3\}$   & 1              & 2          & 2         & 4     \\
\hline
$\{s_1,s_3\},\{s_1,s_3\}$   & 3              & 2          & 4         & 8     \\
\hline
(Quasi-homoorbital)         & 4              & 3          & 5         & 9     \\
\hline
$\{s_1,s_3\},\{s_2,s_3\}$   & 1              & 2          & 3         & 7     \\
\hline
$\{s_2,s_3\},\{s_2,s_3\}$   & 1              & 1          & 3         & 5     \\
\hline
(Quasi-homoorbital)         & 2              & 2          & 4         & 6     \\
\hline
$\emptyset, \emptyset$      & 9              & 7          & 16        & 37    \\
\hline
Total                       & 31             & 24         & 45        & 88    \\
\hline
\end{tabular}
\caption{Number of univalent and non-univalent polypolyhedra for Coxeter groups of rank 3.}\label{tab:rank 3 raw}
\end{center}
\end{table}

Table \ref{tab:rank 3 uni} is calculated using Theorem \ref{thm:univalent} for types without $C_\emptyset$ endpoints and Lemma \ref{lem:univalent C type} for types with $C_\emptyset$ endpoints. Table \ref{tab:rank 3 non-uni} is calculated using Theorem \ref{thm:main result}, and this result gives us a final total of $128$ distinct non-univalent polypolyhedra from the four groups we considered.

\begin{table}[ht]
    \centering
    \begin{tabular}{|c|c|c|c|c|}
    \hline
         & $\dih_6$ & $\alpha_3$ & $\beta_3$ & $H_3$ \\
    \hline
    $\{s_1,s_2\},\{s_1,s_2\}$ & 1        & 0          & 1         & 1     \\
    \hline
    $\{s_1,s_2\},\{s_1,s_3\}$ & 0        & 0          & 0         & 0     \\
    \hline
    $\{s_1,s_2\},\{s_2,s_3\}$ & 0        & 1          & 0         & 0     \\
    \hline
    $\{s_1,s_3\},\{s_1,s_3\}$ & 1        & 1          & 1         & 1     \\
    \hline
    $\{s_1,s_3\},\{s_2,s_3\}$ & 0        & 0          & 0         & 0     \\
    \hline
    $\{s_2,s_3\},\{s_2,s_3\}$ & 1        & 0          & 1         & 1     \\
    \hline
    $\emptyset, \emptyset$    & 1        & 9          & 23        & 31    \\
    \hline
    Total                     & 4        & 11         & 26        & 34    \\
    \hline
    \end{tabular}
    \caption{Number of univalent polypolyhedra for Coxter groups of rank 3.}
    \label{tab:rank 3 non-uni}
\end{table}

\begin{table}[ht]
    \centering
    \begin{tabular}{|c|c|c|c|c|}
    \hline
       & $\dih_6$ & $\alpha_3$ & $\beta_3$ & $H_3$ \\
    \hline
    $\{s_1,s_2\},\{s_1,s_2\}$ & 2        & 1          & 1         & 2     \\
    \hline
    $\{s_1,s_2\},\{s_1,s_3\}$ & 3        & 2          & 3         & 5     \\
    \hline
    $\{s_1,s_2\},\{s_2,s_3\}$ & 1        & 1          & 2         & 4     \\
    \hline
    $\{s_1,s_3\},\{s_1,s_3\}$ & 2        & 1          & 3         & 7     \\
    \hline
    $\{s_1,s_3\},\{s_2,s_3\}$ & 1        & 2          & 3         & 7     \\
    \hline
    $\{s_2,s_3\},\{s_2,s_3\}$ & 0        & 1          & 2         & 4     \\
    \hline
    $\emptyset, \emptyset$    & 5        & 7          & 12        & 44    \\
    \hline
    Total                     & 14       & 15         & 26        & 73    \\
    \hline
    \end{tabular}
    \caption{Number of non-univalent polypolyhedra for Coxter groups of rank 3.}
    \label{tab:rank 3 uni}
\end{table}

\end{example}

\section{Lang's Algorithm}\label{sec:langsmain result}
To verify our main result we compare it with Lang's main result, which is based on the implicit assumption that two struts of the same orbit-type are equivalent if they have the same length. Our analysis in this section shows that this assumption is not \emph{in general} true, but as we show, it is by serendipity true for all groups of rank 3 (although this would likely not have been known to Lang).

Here we explain the main step of Lang's algorithm in order to point out where the implicit assumption is. If our orbit type is $(W,I,J)$, we first choose any point $p \in \mathbb{R}^3$ of type $I$, which without loss of generality we may take to be $C_I$. Let $Q$ denote the set of points of type $J$, $\{wC_J \mid w \in W\}$. Let $(p,Q)=\{(p,q) \mid q \in Q\}$, which represents the set of all possible struts incident $p$. Given any strut $(p',q)$ where $p'$ is a $C_I$ point and $q$ is a $C_J$ point, there must be a $w \in W$ such that $w(p')=p$. Since $w(q) \in Q$, it must be the case that $(w(p'),w(q)) \in (p,Q)$. Hence, $(p,Q)$ contains a representative of every orbit of the struts. 

Recall from Section \ref{sec:cox} that Coxeter groups of rank $n$ have a representation as subgroups of $O(n)$, which preserve Euclidean distance. Therefore, if two struts are in the same orbit then their lengths are equal. Lang's algorithm computes the list of the lengths of each potential seed strut in $(p,Q)$ and considers two struts to be equivalent precisely when they have the same length. Since each orbit of struts has a representative in $(p,Q)$, each possible length of strut is represented.

However, in order for distinct struts not to be considered equivalent, this approach relies on the implicit assumption that struts of equal length are always in the same orbit. The following counter-example demonstrates that this is not necessarily so.

Let $W = \beta_4$ (\dynkin[Coxeter] B4) and $I=J=\{s_1, s_3, s_4\}$ (\dynkin[Coxeter] A1 \dynkin[Coxeter] B2). Since $\beta_4 \cong S_2 \wr S_4$, we can represent $\beta_4$ as a matrix group in two steps. First, we take a permutation of the four standard basis vectors $\{e_i\}_{i=1}^4$, namely $(1,0,0,0)$, $(0,1,0,0)$, $(0,0,1,0)$, and $(0,0,0,1)$, encoding the information from $S_4$. Then we multiply this by a diagonal matrix with entries $-1$ or $1$, which gives us a wreath product with $S_2$. One choice of generating reflections is
\begin{align*}
    s_1 = \begin{bmatrix}
        1 & 0 & 0 & 0 \\
        0 & 1 & 0 & 0 \\
        0 & 0 & 0 & 1 \\
        0 & 0 & 1 & 0
    \end{bmatrix}, \quad
    s_2 = \begin{bmatrix}
        1 & 0 & 0 & 0 \\
        0 & 0 & 1 & 0 \\
        0 & 1 & 0 & 0 \\
        0 & 0 & 0 & 1
    \end{bmatrix}, \\
    s_3 = \begin{bmatrix}
        0 & 1 & 0 & 0 \\
        1 & 0 & 0 & 0 \\
        0 & 0 & 1 & 0 \\
        0 & 0 & 0 & 1
    \end{bmatrix}, \quad
    s_4 = \begin{bmatrix}
        -1 & 0 & 0 & 0 \\
        0 & 1 & 0 & 0 \\
        0 & 0 & 1 & 0 \\
        0 & 0 & 0 & 1
    \end{bmatrix}
\end{align*}

The point $C_I$ should be fixed by $s_1$, $s_3$, and $s_4$ and not by $s_2$. We shall represent $C_I$ by $a=(0,0,1,1)$. It is easy to see that the orbit of $C_I$ is precisely the set of vectors $\{ \pm e_i \pm e_j \mid i \neq j\}$. Moreover, a straight-forward check shows that the vectors $b=(1,1,0,0)$ and $c=(0,0,1,-1)$ are equal distance from $a$, and yet the pair $\{a,b\}$ is not in the same orbit $\{a,c\}$ under $W$. Thus we have an example of struts with equal length which are nonetheless not equivalent. This shows that Lang's algorithm is not in general valid.

\section{Conclusion}\label{sec:conc}

We gave a direct enumeration of polypolyhedra, giving a count of $128$. In contrast to the earlier, partially-computational work of Lang, we obtained the number of distinct polypolyhedra by calculating the number of orbits of possible seed struts under a given Coxeter group, $W$, and subtracting the number of univalent polypolyhedra. 

The number of orbits was calculated by using a modified version of Burnside's lemma. This required us to calculate the number of struts fixed by an element of each conjugacy class of $W$, which in turn required calculating the number of each type of endpoint fixed by an element of each conjugacy class. The latter calculation was the subject of the main theorem of our paper, Theorem \ref{thm:important}. This relied on a calculation of the conjugacy classes of $W$ and its parabolic subgroups.

Polypolyhedra which do not meet property 3-5 of Section \ref{key_concepts} are often subjectively as interesting as those which do. However, these classes of polypolyhedra do not meet Lang's original definition of polypolyhedra, so we would at least like to know how many there are. That we were not able to determine this is probably because the precise number of intersections is essentially a topological rather than algebraic property, so there might not be a better approach than Lang's brute force algorithm to enumerate this class of polypolyhedra.

For future consideration, we speculate here that insights from the theory of root systems, which is very well developed, could be applied to polypolyhedra whose symmetry group is a Weyl group\footnote{Weyl groups are Coxeter groups which are the isometry group of the root system of a Lie algebra.}.

Finally, we note that although polypolyhedra may not themselves be mathematically important, the algebraic approach employed in the present paper---especially our Theorem \ref{thm:important}---can be gainfully applied to other geometrical enumeration problems, such as those arising in applications, for example in chemistry.

\printbibliography

\end{document}